\definecolor{frenchblue}{rgb}{0.0, 0.45, 0.73}
\definecolor{cadmiumgreen}{rgb}{0.0, 0.42, 0.24}
\definecolor{chocolate(traditional)}{rgb}{0.48, 0.25, 0.0} % fixed name
\newtheorem{theorem}{Theorem}
\newtheorem{lemma}{Lemma}
\title[Rebalancing Modular Transit Systems]{Rebalancing Modular Transit Systems:\\ 
A Hierarchical Graph-Based Optimization Framework \\ for Fleet Sizing and Routing}
\def\@abstract{
This study addresses the rebalancing of empty modular transit pods between scheduled service trips in fixed-route bus systems. A two-stage hierarchical optimization framework is proposed. The first stage determines the minimum fleet size and initial vehicle assignments by solving a maximum matching problem on a bipartite graph, using a GPU-accelerated push-relabel algorithm. The second stage formulates detailed routing as a series of minimum-cost flow problems on time-space networks. To manage memory usage in large instances, a capped-interval heuristic limits the size of each network by dividing long scheduling intervals into subintervals. Computational experiments on the Manhattan bus network show that the proposed method achieves performance comparable to the full-scale time-space network formulation in terms of objective value, while enabling the solution of instances that are otherwise intractable due to memory limitations.}
\renewcommand{\maketitle}{%
  \begin{center}
    {\Large\bfseries \@title \par}
    \vspace{1ex}
    {\normalsize Tina Radvand, Alireza Talebpour*, Yanfeng Ouyang \par}
    \vspace{0.5ex}
    {\small The Grainger College of Engineering, Department of Civil \& Environmental Engineering, \\
    University of Illinois Urbana-Champaign, 205 N Mathews Ave, Urbana, IL 61801, USA \par}
    \vspace{0.5ex}
    {\small \texttt{radvand2@illinois.edu, ataleb@illinois.edu, yfouyang@illinois.edu}}
  \end{center}
  \bigskip
  % Print abstract directly
  \noindent\textbf{Abstract.} \@abstract
  \bigskip
}
\begin{document}

\keywords{Public Transit; Modular Transit; Fleet Size Planning; Fleet Rebalancing; Routing}

\maketitle

\section{Introduction}

% Current Challenges and Evolution in Public Transit
Public transit systems, especially bus services, are vital for sustainable urban development. They significantly help reduce road congestion \citep{stopher2004reducing}, reduce emissions \citep{bel2018evaluation}, and provide fair access to transportation \citep{venter2018equity}. However, putting these benefits into practice often faces many operational challenges.

In areas with low population density and low demand, traditional fixed-route bus services are often inefficient. They are fuel- and labor-intensive. This often leads transit agencies to cut service frequencies. 
Unfortunately, such cost-saving measures often cause ridership to drop even further, creating a negative cycle. 

In contrast, busy urban areas struggle with peak-hour passenger overload \citep{tyrinopoulos2013factors} and longer travel times due to frequent stops \citep{ha2020unraveling}. In addition, significant costs arise from moving empty buses to rebalance them for directional demand imbalances \citep{cortes2011integrating}.

%Introduction to Autonomous Modular Public Transit :

Modular bus technology offers a promising alternative to conventional bus operations by addressing several of the aforementioned limitations. Its innovative design supports non-stop journeys~\citep{khan2025no}, reducing overall travel times and minimizing service disruptions, such as bus bunching~\citep{khan2023application, khan2023bus}. In addition, modular buses can adjust available capacity according to demand, with more pods deployed during peak hours and fewer during off-peak periods. This flexibility helps maintain high fleet occupancy while ensuring the desired level of service~\citep{tian2022planning, cheng2024autonomous}.

%Problem Statement and Research Gap:

While modular transit offers considerable potential to address many current transit challenges, its effective implementation requires robust operational planning. This plan should specify the optimal number of pods in the fleet and cost-efficient strategies for routing them across a network to meet time-sensitive and fluctuating passenger demand.

Existing research on modular bus systems has primarily focused on optimizing operational parameters, such as headways and the number of pods assigned to each service trip~\citep{dai2020joint}. These studies concentrate on \textit{in-service} routing, which addresses how modular pods are deployed to serve passengers. In contrast, \textit{between-service} routing, i.e., the repositioning of empty pods across the network, has received little attention. For example, \cite{pei2021vehicle} explicitly stated that they do not constrain fleet size, allowing them to dispatch “as many vehicles as necessary.” They also assumed unlimited parking capacity and deferred all cost-related considerations, including dwell time, acquisition, and maintenance, to a separate planning stage.

%Your Paper's Contributions and Approach:
This study addresses this gap by proposing a framework for optimizing the routing of empty pods between service trips. Given the scheduled headways and pod capacities, the model determines cost-minimizing pod assignments while accounting for purchase, maintenance, parking, and repositioning costs. Accordingly, the main contributions of this study to the literature on modular transit systems are (1) Introducing a binary linear optimization model to determine cost-efficient routing for rebalancing empty modular pods between scheduled service trips. This formulation provides the mechanism to capture pod acquisition, maintenance, parking, and repositioning costs within a fixed schedule with known demand. This approach captures costs associated with empty pod movements between services, a significant gap in the current literature; and (2) Introducing a novel two-step hierarchical solution approach capable of identifying the solution to the binary linear optimization model in an efficient and scalable manner. This method significantly reduces the complexity of the original problem by systematically breaking it into two stages: In the first step, the minimum fleet size is computed by reducing the problem to a minimum disjoint path cover in a directed acyclic graph (DAG). The advantage of this approach is that this graph-based representation can be transformed into a maximum matching problem on a bipartite graph and solved using an exact fast GPU-based push-relabel algorithm, ensuring scalability and efficiency of pod assignment to scheduled service trips in large transit networks. The second step aims to generate explicit routing plans by constructing time-space networks and solving a minimum-cost flow problem over the intervals between pod assignments. The flow in each of these time-space networks is a unit flow by construction. This knowledge allows the network to be broken into smaller, manageable pieces that can be solved sequentially to save memory and computing power. The computational performance, accuracy, and scalability of the aforementioned two-step hierarchical method is evaluated using real-world data from Manhattan bus transit network.

%Paper Organization / Roadmap:

The remainder of this paper is organized as follows. Section~\ref{sec:Literature Review} reviews related work. Section~\ref{sec:Model Formulation} presents the model formulation. Section~\ref{sec:Solution Approach} describes the solution approach, including integrated and hierarchical pod routing. Section~\ref{sec:Case Study} details the case study, covering data preparation, experimental design, and computational setup. Section~\ref{sec:Results and Discussion} presents results and discussion, and Section~\ref{sec:Conclusion} concludes the paper.

\section{Literature Review}
\label{sec:Literature Review}

This section reviews selected literature on operational planning in modular transit systems, along with relevant fleet sizing and routing techniques.

% Operational Planning in Modular Transit Systems: Focus on In-Service Operations:

Research in operational planning and management of modular transit has largely concentrated on optimizing in-service operations. These studies explore how modularity enhances service quality by adapting vehicle capacity and dispatching strategies to meet fluctuating demand. For instance, \cite{dakic2021design} developed a framework utilizing the three-dimensional macroscopic fundamental diagram (3D-MFD) to jointly determine the optimal composition of modular bus units and service frequency, aiming to enhance urban transit system efficiency. Similarly, \cite{dai2020joint} proposed a dynamic programming approach to jointly optimize dispatch headways and vehicle capacities in a mixed traffic environment, minimizing total system cost.

Several studies also explored specific operational designs. \cite{chen2019operational} focused on the operational design of shuttle systems with modular vehicles under oversaturated traffic, optimizing dispatch frequency, composition, and platooning decisions. \cite{chen2021designing} proposed a discrete modeling method for designing transit corridor systems where modular autonomous vehicles adjust their formation dynamically at stations through docking and undocking operations. \cite{pei2021vehicle} presented a mixed-integer nonlinear programming model for optimal vehicle dispatching and modular composition in a network, balancing vehicle operation costs and passenger trip time costs. Additionally, \cite{cao2025optimizing} proposed an integer nonlinear programming (INLP) model that jointly optimizes departure intervals, modular vehicle capacity, short-turning schemes, and decoupling/coupling operations. The model explicitly accounts for modular units that are decoupled or coupled, deadheaded from, or returned to the depot.

While existing studies have significantly advanced our understanding of the benefits of utilizing modular transit for passenger service, their focus has been on optimizing in-service operations. They generally do not explicitly address the repositioning and scheduling of empty pods between service assignments. This is particularly critical to ensure cost-effective and smooth operations. \textcolor{black}{Although crew scheduling has been widely studied in the transit literature \citep{munoz2002driver} and shares some similarities with the between-service scheduling of modular pods, the two problems remain fundamentally different due to the constraints of human labor versus automated vehicle systems. This distinction leaves between-service routing in modular transit systems an underexplored gap in the current literature.}

% Minimum Fleet Sizing, Graph-Based Approaches:

An important part of planning modular transit systems is finding the minimum fleet size. One established method in this area is presented by \cite{ceder1981deficit}. This approach involves a two-stage process: first, a graphical deficit function is used to determine an initial minimum number of vehicles for a fixed schedule. The second stage then systematically inserts deadheading trips to further reduce this fleet size, a procedure that is equivalent to solving maximum network flow. Building on this concept, \cite{liu2020using} adapted the deficit function theory to specifically address the minimum fleet size problem for autonomous modular public transit systems. %Our approach to this problem involves reducing it to finding the minimum disjoint path cover in a directed acyclic graph (DAG). This approach is selected for its polynomial-time solvability. It also allows substantial computational speedup through fast GPU-based implementations. These features make it particularly suitable for large-scale transportation networks.

Another approach for finding the minimum fleet size in transportation systems is solving maximum matching on a bipartite graph \citep{bertossi1987some}. For example, \cite{wang2007heuristic} described a two-phase heuristic solution method for the Vehicle Scheduling Problem with Route and Fueling Time Constraints. The first phase generates vehicle rotations using a multiple ant colony algorithm. In the second phase, a bipartite graph model and an optimization algorithm are used to connect the rotations and minimize the number of vehicles deployed. The maximum matching of this bipartite graph is determined by computing the maximum inflow with the Ford–Fulkerson algorithm \citep{ford2015flows}.
\cite{zhan2016graph} proposed a foundational graph-theoretic approach that aimed to reduce the minimum fleet problem for taxi services to a maximum bipartite matching problem. Building on this, \cite{vazifeh2018addressing} applied this maximum bipartite matching approach to large-scale real-world taxi trip data to determine the minimum fleet size for on-demand urban mobility. Complementing these works, \cite{ye2023min} provided a theoretical foundation by proving a min-max theorem for the minimum fleet size problem (i.e., the minimum fleet size needed equals the maximum number of pairwise incompatible trips), and demonstrated that this problem can be solved as a maximum bipartite matching.

% Hierarchical and Multi-Stage Optimization in Transit Routing and Scheduling

In addition to finding the minimum fleet size, optimal scheduling in transit systems have been explored in the literature. Bus schedules are often optimized using minimum flow formulations in time-space networks. \cite{kliewer2006time} studied the Multi-Depot Multi-Vehicle-Type Bus Scheduling Problem (MDVSP) to assign buses to cover timetabled trips, minimizing total costs, including fixed vehicle costs and variable operational costs (e.g., travel, idle time, empty movements, and waiting time). They utilized a time-space network model with node aggregation to reduce network size, solving instances with up to 7068 trips and 124 stations. More recently, \cite{olsen2022study} investigated flow decomposition methods for scheduling electric buses, also using aggregated time-space network models to determine and refine optimal vehicle rotations.

Unfortunately, even with node aggregation, time-space network-based solutions may become excessively large for modular transit networks. The variable number of pods (potentially, up to five pods) per bus runs significantly increases the complexity of the problem. Furthermore, real-world networks, such as Manhattan's, can involve over 24,000 bus runs and approximately 1,900 bus stations, far exceeding the scale of instances solved by \cite{kliewer2006time}.

One effective way to tackle this complexity is through multi-stage heuristics that break down the problem into smaller, solvable components. For instance, \cite{gintner2005solving} tackled the NP-hard multiple-depot, multiple-vehicle-type scheduling problem (MDVSP) using a two-phase “fix-and-optimize” heuristic. In the first phase, the original problem is decomposed into single-depot vehicle scheduling subproblems for each depot. These subproblems are solved, and common sequences of trips, referred to as “stable chains,” are identified. In the second phase, the stable chains are fixed in the original MDVSP formulation, which reduces the problem size and enables the computation of near-optimal solutions for large-scale instances. Similarly, \cite{penna2019hybrid} proposed a hybrid metaheuristic (HILS-RVRP) for Rich Vehicle Routing Problems with Heterogeneous Fleets (HFRVRPs), which employs an implicit two-stage logic. The first stage generates initial solutions by assigning customers to vehicles and constructing preliminary routes via an insertion heuristic and the second stage iteratively refines these solutions through local search procedures and a set partitioning formulation that re-optimizes route-to-vehicle assignments.

% Research Gap and Study Positioning:
Despite all these efforts in modular transit operational planning, a critical research gap persists. The current literature lacks a robust and scalable framework for the repositioning and scheduling of empty modular pods, considering all operational and capital costs, including those in between services. This study addresses this gap through introducing an scalable framework for modular pod rebalancing. The proposed framework formulates the public transit minimum fleet size problem as a maximum matching in a bipartite graph, that can be solved with a GPU-based model for enhanced efficiency. Moreover, this study introduces a hierarchical heuristic that finds optimal pod routes by breaking the problem into small subproblems solvable in parallel on conventional compute platforms.

\section{Model Formulation}
\label{sec:Model Formulation}
This study addresses the problem of allocating modular transit pods within a fixed-route bus system to meet time-sensitive passenger demand. The system operates along multiple bus lines, each with a fixed sequence of station visits and arrival times. At each scheduled stop, a certain number of pods must be available to serve the passenger demand at that station. The objective is to determine the minimum fleet size and the routing of pods such that all service demands are met with the minimum operational cost.

To mathematically formulate the problem, we define the set of bus runs \(B\), where each run \(b \in B\) denotes a scheduled trip that visits an ordered sequence of stations at specified times. For each stop along a run, the required number of pods is known in advance based on the historical local passenger demand. We also assume that passengers can be moved freely across pods of the same bus run in between stops, so that excessive pods can be removed from the bus without disrupting service \citep{khan2025no}. For notational convenience, let \(d_{s_i} \in \mathbb{Z}_+\) denote the number of pods required to serve demand at station \(s_i\) along the run. Figure~\ref{fig:bus_run} illustrates an example of four bus runs scheduled to address passenger demands within a bus transit network.

A pod’s movement includes two phases: (1) \textit{In-service routing}, when the pod carries passengers along a bus run, and (2) \textit{Between-service routing}, when the pod travels empty between runs. \textcolor{black}{The details of pod routings for both phases are discussed in the following sections.} 

\tikzset{
    styleblue/.style={draw=Blue, fill=none, thick, double, circle, text=black, minimum size=3mm, font=\scriptsize},
    stylegreen/.style={draw=OliveGreen, fill=none, very thick, circle, text=black, minimum size=3mm, font=\scriptsize},
    stylered/.style={draw=Maroon, fill=none, very thick, dashed, circle, text=black, minimum size=3mm, font=\scriptsize},
    styleblack/.style={draw=black, fill=none, very thick, solid, circle, text=black, minimum size=3mm, font=\scriptsize},
    stylepurple/.style={draw=black, fill=none, very thick, dash dot, circle, text=black, minimum size=3mm, font=\scriptsize},
    stylefade/.style={draw=lightgray, thick, fill=none, dash dot, circle, text=lightgray, minimum size=3mm, font=\scriptsize }
}

\begin{figure}[!h]
    \centering
    \begin{tikzpicture}
\draw[->] (-0.5,0) -- (0.8*\linewidth,0) node[right]{ };

% Define the custom times and their positions
\foreach \x/\time in {0/13:00, 0.06666666666/13:02, 0.1/13:03, 0.16666666666/13:05, 
    0.23333333333/13:07, 0.33333333333/13:10, 0.43333333333/13:13, 
    0.46666666666/13:14, 0.56666666666/13:17,0.6/13:18, 0.66666666666/13:20, 
    0.76666666666/13:23, 0.8/13:24, 0.83333333333/13:25} 
{
    \draw (\x*\linewidth*0.9, -0.1) -- (\x*\linewidth*0.9, 0.1);
    \draw (\x*\linewidth*0.9 + 0.013*\linewidth, 0.5) node[above, rotate=90, font=\scriptsize] {\time};
}
\node[styleblack] (1) at (0*\linewidth*0.9, -1) {$s_1$};
\node[styleblack] (4) at (0.06666666666*\linewidth*0.9, -1) {$s_2$};
\node[styleblack] (7) at (0.16666666666*\linewidth*0.9, -1) {$s_3$};
\node[styleblack] (9) at (0.23333333333*\linewidth*0.9, -1) {$s_4$};

\node[styleblack] (11) at (0.6*\linewidth*0.9, -1) {$s_1$};
\node[styleblack] (13) at (0.66666666666*\linewidth*0.9, -1) {$s_2$};
\node[styleblack] (15) at (0.76666666666*\linewidth*0.9, -1) {$s_3$};
\node[styleblack] (16) at (0.83333333333*\linewidth*0.9, -1) {$s_4$};

\node[styleblack] (17) at (0.43333333333*\linewidth*0.9, -2.5) {$s_5$};
\node[styleblack] (18) at (0.56666666666*\linewidth*0.9, -2.5) {$s_6$};
\node[styleblack] (20) at (0.66666666666*\linewidth*0.9, -2.5) {$s_3$};
\node[styleblack] (22) at (0.8*\linewidth*0.9, -2.5) {$s_7$};

\node[styleblack] (25) at (0.1*\linewidth*0.9, -4) {$s_5$};
\node[styleblack] (27) at (0.23333333333*\linewidth*0.9, -4) {$s_6$};
\node[styleblack] (29) at (0.33333333333*\linewidth*0.9, -4) {$s_3$};
\node[styleblack] (31) at (0.46666666666*\linewidth*0.9, -4) {$s_7$};

% Add numbers above each node
\node[font=\scriptsize] at (0*\linewidth*0.9, -0.5) {3};
\node[font=\scriptsize] at (0.06666666666*\linewidth*0.9, -0.5) {3};
\node[font=\scriptsize] at (0.16666666666*\linewidth*0.9, -0.5) {2};
\node[font=\scriptsize] at (0.23333333333*\linewidth*0.9, -0.5) {2};

\node[font=\scriptsize] at (0.6*\linewidth*0.9, -0.5) {2};
\node[font=\scriptsize] at (0.66666666666*\linewidth*0.9, -0.5) {2};
\node[font=\scriptsize] at (0.76666666666*\linewidth*0.9, -0.5) {1};
\node[font=\scriptsize] at (0.83333333333*\linewidth*0.9, -0.5) {1};

\node[font=\scriptsize] at (0.43333333333*\linewidth*0.9, -2) {1};
\node[font=\scriptsize] at (0.56666666666*\linewidth*0.9, -2) {2};
\node[font=\scriptsize] at (0.66666666666*\linewidth*0.9, -2) {2};
\node[font=\scriptsize] at (0.8*\linewidth*0.9, -2) {1};

\node[font=\scriptsize] at (0.1*\linewidth*0.9, -3.5) {2};
\node[font=\scriptsize] at (0.23333333333*\linewidth*0.9, -3.5) {3};
\node[font=\scriptsize] at (0.33333333333*\linewidth*0.9, -3.5) {3};
\node[font=\scriptsize] at (0.46666666666*\linewidth*0.9, -3.5) {1};

\path [->, thick] (1) edge (4);
\path [->, thick] (4) edge (7);
\path [->, thick] (7) edge (9);

\path [->, thick, out=0, in=180] (11) edge (13);
\path [->, thick,out=0, in=180] (13) edge (15);
\path [->, thick, out=0, in=180] (15) edge (16);

\path [->, thick, out=0, in=180] (25) edge (27);
\path [->, thick, out=0, in=180] (27) edge (29);
\path [->, thick, out=0, in=180] (29) edge (31);

\path [->, thick, out=0, in=180] (17) edge (18);
\path [->, thick, out=0, in=180] (18) edge (20);
\path [->, thick, out=0, in=180] (20) edge (22);

\end{tikzpicture}
    \caption{Illustration of a bus schedule within a bus network. Four bus runs operate to accommodate passenger demand, with the minimum number of required pods indicated above each station node. Nodes \(s_i\) correspond to stations in the network.}
    \label{fig:bus_run}
\end{figure}
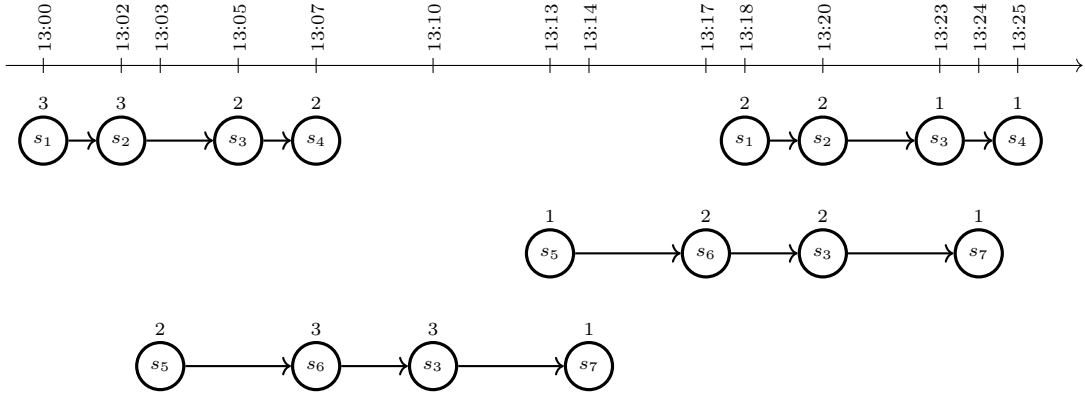

\subsection{In-service routing}

The in-service routing can be derived directly from the station-level passenger demand by decomposing each bus run into a set of pod routes. The detailed procedure is described in Algorithm~\ref{alg:decompose}. This decomposition ensures that no pod leaves the run before completing its passenger assignments. \textcolor{black}{While the algorithm employs a greedy strategy, its optimality is proven in Theorem~\ref{Thrm:1}.} An example of this decomposition is illustrated in Figure~\ref{fig:decomposition} for the runs presented in Figure~\ref{fig:bus_run}.

\begin{algorithm}[H]
\caption{decompose\_bus\_run}
\label{alg:decompose}
\begin{algorithmic}
    \State \textbf{Input:} Bus run \(b\) with ordered stations \([s_1, \dots, s_n]\), demands \(\mathbf{d} = (d_{s_1}, \dots, d_{s_n})\)
    \State \textbf{Output:} Set of pod routes $R$
    \State $R \gets \emptyset$.
    \While{any $d_s > 0$} % While unserved demand exists
        \State Find first station $s_i$ with $d_{s_i} > 0$.
        \State Start new route $r$ from $s_i$.
        \State Extend $r$ by adding consecutive $s_j$ with $d_{s_j} > 0$.
        \State Decrement $d_s$ by 1 for all $s \in r$.
        \State Add $r$ to $R$.
    \EndWhile
    \State \Return $R$
\end{algorithmic}
\end{algorithm}

\begin{theorem}
\label{Thrm:1}
Algorithm~\ref{alg:decompose} returns the minimum number of pod routes required to serve the passenger demand.
\end{theorem}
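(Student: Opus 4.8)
The plan is to identify the combinatorial structure of a feasible decomposition, prove a lower bound that every decomposition must obey, and then show that the greedy of Algorithm~\ref{alg:decompose} meets that bound. Write the run as $s_1,\dots,s_n$ with demands $d_1,\dots,d_n$, set $d_0:=0$, and represent a pod route as a contiguous index interval $[l,u]$ (a pod joins at $s_l$, rides through $s_l,\dots,s_u$, and leaves after $s_u$). Feasibility means that the coverage of each station $s_j$ — the number of routes whose interval contains $j$ — \emph{equals} $d_j$, i.e.\ exactly the required number of pods is present at every stop. Let $M(\mathbf d):=\sum_{j=1}^{n}\max\{0,\,d_j-d_{j-1}\}$ be the total ``ascent'' of the demand profile; the claim I would establish is that the minimum number of routes is exactly $M(\mathbf d)$.

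For the lower bound, fix any feasible decomposition and let $a_j$ (resp.\ $b_j$) be the number of routes with left (resp.\ right) endpoint $j$. Comparing coverage at $s_{j-1}$ and $s_j$ — a route is counted at $s_j$ but not $s_{j-1}$ precisely when it starts at $j$, and at $s_{j-1}$ but not $s_j$ precisely when it ends at $j-1$ — gives $d_j-d_{j-1}=a_j-b_{j-1}$ for every $j\ge 1$, with $b_0:=0$. Since $a_j\ge 0$ and $b_{j-1}\ge 0$, this forces $a_j\ge\max\{0,\,d_j-d_{j-1}\}$, so the number of routes $\sum_j a_j$ is at least $M(\mathbf d)$.

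To show the greedy attains $M(\mathbf d)$, I would use two observations. First, correctness: each iteration strictly decreases $\sum_j d_j$, so the loop halts, and it halts only when all residual demands are $0$; since the residual of $s_j$ starts at $d_j$ and drops by exactly $1$ whenever $s_j$ is covered (which happens only while that residual is positive), $s_j$ is covered exactly $d_j$ times, so the output is feasible. Second, the route built in one iteration spans a \emph{maximal} block $[i,k]$ of currently positive residual demands, so by the choice of $s_i$ (first positive station) and $s_k$ (last before a zero) the flanking positions $i-1$ and $k+1$ have residual demand $0$; decrementing the block by $1$ therefore lowers the ascent term at position $i$ by exactly $1$ (from $d_i$ to $d_i-1$), leaves every ascent term at $i+1,\dots,k$ unchanged (both $d_j$ and $d_{j-1}$ drop by $1$), and leaves the terms at $i-1$ and $k+1$ unchanged (those differences stay $\le 0$). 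Hence each iteration reduces $M$ by exactly $1$; starting from $M(\mathbf d)$ and ending at $M(\mathbf 0)=0$, the greedy performs exactly $M(\mathbf d)$ iterations, matching the lower bound.

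The fussiest part should be the bookkeeping in the last step — the boundary cases $i=1$ and $k=n$, and verifying that the flanking residual demands are genuinely $0$. The one modeling point worth stating explicitly is that ``serving the passenger demand'' must be read as exact coverage: if a route could ride empty through a low-demand stretch (over-provisioning there to avoid a fresh start elsewhere), the identity $d_j-d_{j-1}=a_j-b_{j-1}$ weakens to an inequality and $M(\mathbf d)$ need no longer be a valid lower bound.
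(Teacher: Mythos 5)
Your proof is correct, but it takes a genuinely different route from the paper. The paper models a run as a complete sequential multipartite graph $K_{d_{s_1},\dots,d_{s_n}}$, casts the problem as a minimum disjoint path cover, and proves optimality of the greedy via an exchange argument (an induction showing that some minimum path cover always contains a path spanning all non-empty partitions of a connected component, so peeling off a longest path and recursing is safe). You instead prove a matching lower bound directly: the total ascent $M(\mathbf d)=\sum_j\max\{0,d_j-d_{j-1}\}$ bounds any exact-coverage decomposition from below via the endpoint identity $d_j-d_{j-1}=a_j-b_{j-1}$, and each greedy iteration decrements $M$ by exactly one because the route it builds is a maximal block of positive residuals flanked by zeros. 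Your argument buys a closed-form expression for the optimum (which the paper never states), avoids the somewhat delicate path-splitting bookkeeping in the paper's inductive step, and is fully self-contained; the paper's framing buys a graph structure that it reuses later when reducing fleet sizing to path covers on a DAG. Both arguments hinge on the same modeling assumption --- that coverage must equal $d_j$ exactly rather than merely meet it --- which the paper encodes implicitly by requiring the path cover to partition the node set of $K_{d_{s_1},\dots,d_{s_n}}$, and which you rightly flag as essential, since with over-provisioning allowed your lower bound degrades to an inequality in the wrong direction. The only details worth writing out fully in your version are the boundary conventions $d_0=b_0=0$ and the cases $i=1$, $k=n$, all of which check out.
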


\begin{proof}
Given the bus run with demands \(\mathbf{d} = (d_{s_1}, \dots, d_{s_n})\), the complete sequential multipartite graph \(G = K_{d_{s_1}, d_{s_2}, \dots, d_{s_n}}\) is constructed. $G$ includes one partition per station, with $d_{s_i}$ nodes for partition $i$. Edges connect every pair of nodes in two consecutive non-empty partitions. A pod route is a path in \(G\) that visits consecutive nodes. A disjoint path cover is a set of such paths that are node-disjoint and together cover all nodes. The objective is to find a disjoint path cover of \(G\) with minimum cardinality, which corresponds to minimizing the number of pods required to serve the demand.

If \(d_{s_i} = 0\) for any \(i\), then \(G\) becomes disconnected into disjoint complete sequential multipartite subgraphs. Let \(H_1, \dots, H_m\) denote the resulting connected components. The minimum disjoint path cover of \(G\) is equal to the sum of the minimum disjoint path covers across all components. Hence, the proof may proceed by focusing on individual connected components. Let \(H\) be a connected component of \(G\) with \(k \geq 1\) non-empty partitions.
\begin{lemma}
There exists a minimum disjoint path cover of \(H\) that contains a path of length \(k - 1\).
\end{lemma}

\begin{proof}
The proof proceeds by induction on the size \(r\) of a minimum disjoint path cover in \(H\).

\smallskip
\emph{Base case.} For \(r = 1\), all nodes are connected in a single path that includes one node from each of the \(k\) partitions, so the claim holds.

\smallskip
\emph{Inductive step.} For \(r \geq 2\), suppose the claim holds for all connected complete sequential multipartite graphs whose minimum disjoint path cover has size \(r - 1\). Let \(H\) be a graph whose minimum disjoint path cover has size \(r\). Let \(\mathbf{P} = \{P_1, \dots, P_r\}\) denote an arbitrary minimum disjoint path cover of \(H\). If some path \(P_i \in \mathbf{P}\) already includes one node from each of the \(k\) partitions, the claim is satisfied. Otherwise, an arbitrary path from \(\mathbf{P}\) is selected and, without loss of generality, denoted by \(P_1\). Suppose \(P_1\) begins at partition \(i\). Since \(\mathbf{P}\) is a path cover of size \(r \geq 2\), there exists another path that includes a node from a partition earlier than \(i\). Another path that contains a node in a partition earlier than \(i\) is split at partition \(i-1\). Its initial segment is prepended to \(P_1\), and the resulting path remains node-disjoint from the others.
A similar extension is applied at the end of \(P_1\) by identifying a path with a node in a later partition, splitting it, and appending its tail to \(P_1\). By iteratively extending \(P_1\) from both ends, a path is constructed that spans all \(k\) partitions. Each modification keeps the number of paths unchanged and ensures that each node belongs to exactly one path. Thus, the inductive claim is satisfied.
\end{proof} 
\medskip
Returning to the full graph \(G\), Algorithm~\ref{alg:decompose} constructs such maximum-length paths iteratively. At each step, it selects the earliest unserved station and extends a path through all subsequent stations with available demand. This process corresponds exactly to identifying a longest path in one of the components \(H_i\), removing it from the graph, and repeating the same operation on the residual graph. As shown above, such a longest path always exists in a minimum path cover. Therefore, Algorithm~\ref{alg:decompose} returns a minimum disjoint path cover of \(G\), and thus produces an exact solution.
\end{proof}

% It is important to note that although this algorithm identifies the minimum number of pod routes, the configuration of pod routes can influence the between-service routing. In the ideal situation, the in-service and between-service routing should be jointly optimized. Such a joint optimization, however, is beyond the scope of this study and is left for future research.

\begin{figure}[!h]
    \centering
    \begin{tikzpicture}
\draw[->] (-0.5,0) -- (0.8*\linewidth,0) node[right]{ };

% Define the custom times and their positions
\foreach \x/\time in {0/13:00, 0.06666666666/13:02, 0.1/13:03, 0.16666666666/13:05, 
    0.23333333333/13:07, 0.33333333333/13:10, 0.43333333333/13:13, 
    0.46666666666/13:14, 0.56666666666/13:17,0.6/13:18, 0.66666666666/13:20, 
    0.76666666666/13:23, 0.8/13:24, 0.83333333333/13:25} 
{
    \draw (\x*\linewidth*0.9, -0.1) -- (\x*\linewidth*0.9, 0.1);
    \draw (\x*\linewidth*0.9 + 0.013*\linewidth, 0.5) node[above, rotate=90, font=\scriptsize] {\time};
}

\node[styleblack] (1) at (0*\linewidth*0.9, -1) {$s_1$};
\node[styleblack] (2) at (0*\linewidth*0.9, -2) {$s_1$};
\node[styleblack] (3) at (0*\linewidth*0.9, -3) {$s_1$};

\node[styleblack] (4) at (0.06666666666*\linewidth*0.9, -1) {$s_2$};
\node[styleblack] (5) at (0.06666666666*\linewidth*0.9, -2) {$s_2$};
\node[styleblack] (6) at (0.06666666666*\linewidth*0.9, -3) {$s_2$};

\node[styleblack] (7) at (0.16666666666*\linewidth*0.9, -1) {$s_3$};
\node[styleblack] (8) at (0.16666666666*\linewidth*0.9, -2) {$s_3$};

\node[styleblack] (9) at (0.23333333333*\linewidth*0.9, -1) {$s_4$};
\node[styleblack] (10) at (0.23333333333*\linewidth*0.9, -2) {$s_4$};

\node[styleblack] (11) at (0.6*\linewidth*0.9, -1) {$s_1$};
\node[styleblack] (12) at (0.6*\linewidth*0.9, -2) {$s_1$};

\node[styleblack] (13) at (0.66666666666*\linewidth*0.9, -1) {$s_2$};
\node[styleblack] (14) at (0.66666666666*\linewidth*0.9, -2) {$s_2$};

\node[styleblack] (15) at (0.76666666666*\linewidth*0.9, -1) {$s_3$};
\node[styleblack] (16) at (0.83333333333*\linewidth*0.9, -1) {$s_4$};

\node[styleblack] (17) at (0.43333333333*\linewidth*0.9, -3) {$s_5$};
\node[styleblack] (18) at (0.56666666666*\linewidth*0.9, -3) {$s_6$};
\node[styleblack] (20) at (0.66666666666*\linewidth*0.9, -3) {$s_3$};
\node[styleblack] (22) at (0.8*\linewidth*0.9, -3) {$s_7$};

\node[styleblack] (19) at (0.56666666666*\linewidth*0.9, -4) {$s_6$};
\node[styleblack] (21) at (0.66666666666*\linewidth*0.9, -4) {$s_3$};

\node[styleblack] (23) at (0.23333333333*\linewidth*0.9, -5) {$s_6$};
\node[styleblack] (24) at (0.33333333333*\linewidth*0.9, -5) {$s_3$};

\node[styleblack] (25) at (0.1*\linewidth*0.9, -7) {$s_5$};
\node[styleblack] (27) at (0.23333333333*\linewidth*0.9, -7) {$s_6$};
\node[styleblack] (29) at (0.33333333333*\linewidth*0.9, -7) {$s_3$};
\node[styleblack] (31) at (0.46666666666*\linewidth*0.9, -7) {$s_7$};

\node[styleblack] (26) at (0.1*\linewidth*0.9, -6) {$s_5$};
\node[styleblack] (28) at (0.23333333333*\linewidth*0.9, -6) {$s_6$};
\node[styleblack] (30) at (0.33333333333*\linewidth*0.9, -6) {$s_3$};

\path [->, thick, black] (1) edge (4);
\path [->, thick, black] (4) edge (7);
\path [->, thick, black] (7) edge (9);
\path [->, thick, black] (2) edge (5);
\path [->, thick, black] (5) edge (8);
\path [->, thick, black] (8) edge (10);
\path [->, thick, black] (3) edge (6);
\path [->, thick, black, out=0, in=180] (11) edge (13);
\path [->, thick, black,out=0, in=180] (13) edge (15);
\path [->, thick, black, out=0, in=180] (15) edge (16);
\path [->, thick, black, out=0, in=180] (12) edge (14);
\path [->, thick, black, out=0, in=180] (17) edge (18);
\path [->, thick, black, out=0, in=180] (18) edge (20);
\path [->, thick, black, out=0, in=180] (20) edge (22);
\path [->, thick, black, out=0, in=180] (19) edge (21);
\path [->, thick, black, out=0, in=180] (23) edge (24);
\path [->, thick, black, out=0, in=180] (25) edge (27);
\path [->, thick, black, out=0, in=180] (27) edge (29);
\path [->, thick, black, out=0, in=180] (29) edge (31);
\path [->, thick, black, out=0, in=180] (26) edge (28);
\path [->, thick, black, out=0, in=180] (28) edge (30);
\end{tikzpicture}
    \caption{Decomposition of the bus runs shown in Figure~\ref{fig:bus_run} into individual pod routes. Each route represents the in-service journey of a single pod.}
    \label{fig:decomposition}
\end{figure}
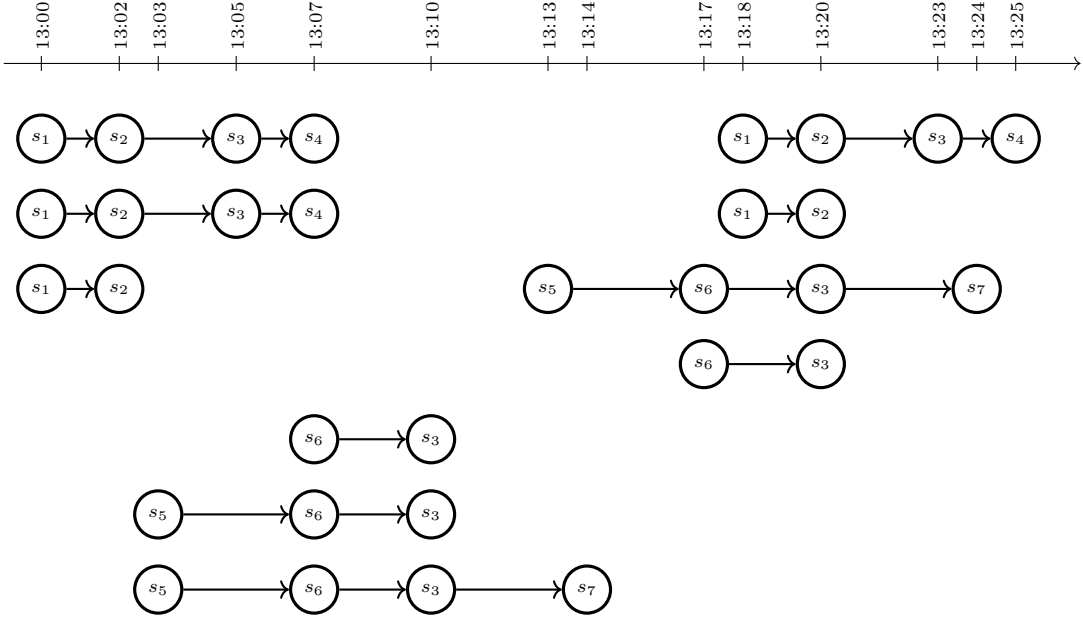
\subsection{between-service routing}

\textcolor{black}{For between-service routing,} the optimization model minimizes three types of cost. The first cost is a fixed fee per pod, \( c^{\text{fleet}} \), charged for each service day. It covers the city’s capital investment in purchasing the pod, along with ongoing expenses such as maintenance, cleaning, and repairs. The second is a movement cost, \( c^{\text{move}} \), applied per unit time when a pod travels empty between services. The third is a parking cost, \( c^{\text{park}}_s\), charged per unit time when a pod is parked at station \( s \). For simplicity, parking is assumed to be available near every station.\footnote{This assumption can be easily relaxed by introducing dedicated parking spaces in the network and adding deadheading costs. To keep the model straightforward, we do not pursue this extension here.} Because the in-service routes are fixed, their cost is a constant and is therefore not included in the optimization.

Table~\ref{tab:notation} summarizes the sets, indices, parameters, and decision variables used in the optimization model. While the model operates on a discrete time grid, route service is defined over continuous time. To align the two representations, the start and end times of each pod route are mapped to discrete time steps. For each route~$r$, the continuous times $\mathrm{StartTime}_r$ and $\mathrm{EndTime}_r$ are mapped to discrete boundaries as follows:
\begin{equation}
\underline{t}_r = \left\lfloor \frac{\mathrm{StartTime}_r}{\Delta t} \right\rfloor \cdot \Delta t, \quad
\overline{t}_r = \left\lceil \frac{\mathrm{EndTime}_r}{\Delta t} \right\rceil \cdot \Delta t,
\end{equation}
where $\underline{t}_r$ and $\overline{t}_r$ are the earliest and latest discrete time steps that fully contain the route’s start and end, respectively.

Similarly, the continuous travel time $\tau_{s,s'}$ between stations $s$ and $s'$ is mapped to the discrete time grid by rounding it up to the nearest full time step. The resulting effective travel time, denoted by $\widehat{\tau}_{s,s'}$, is given by:
\begin{equation}
\widehat{\tau}_{s,s'} = \left\lceil \frac{\tau_{s,s'}}{\Delta t} \right\rceil \cdot \Delta t.
\end{equation}

\noindent The resulting objective function is:
\begin{align}
\min \quad & 
c^{\mathrm{fleet}} \sum_{k \in \mathbf{K}} p_k
+ c^{\mathrm{move}} \sum_{k \in \mathbf{K}} \sum_{(s,s') \in \mathbf{A}} \sum_{t \in \mathbf{T}} \widehat{\tau}_{s,s'} \cdot z_{k,s,s',t} \notag %\\
%& 
+ \sum_{k \in \mathbf{K}} \sum_{s \in \mathbf{S}} \sum_{t \in \mathbf{T}} c^{\mathrm{park}}_s \cdot y_{k,s,t} \notag \\
& + \sum_{k \in \mathbf{K}} \sum_{r \in \mathbf{R}} x_{k,r} \cdot \Big[
\left( \mathrm{StartTime}_r - \underline{t}_r \right) \cdot c^{\mathrm{park}}_{\mathrm{Start}(r)}
+ \left( \overline{t}_r - \mathrm{EndTime}_r \right) \cdot c^{\mathrm{park}}_{\mathrm{End}(r)}
\Big]
\end{align}
The final term accounts for the discretization gap between the true route duration and its rounded service window. The early idle time $(\underline{t}_r - \mathrm{StartTime}_r)$ is charged as parking at the departure station, while the late idle time $(\overline{t}_r - \mathrm{EndTime}_r)$ is charged at the arrival station.

\begin{table}[t]
\centering
\caption{Sets, indices, parameters, and decision variables}
\label{tab:notation}
\small
\begin{tabular}{@{}ll@{}}
\toprule
\multicolumn{2}{l}{\textbf{Sets and Indices}} \\ \midrule
$\mathbf{S}$ & Set of stations, indexed by $s \in \mathbf{S}$ \\
$\mathbf{B}$ & Set of bus runs, indexed by $b \in \mathbf{B}$ \\
$\mathbf{K}$ & Set of pods, indexed by $k \in \mathbf{K}$ \\
$\mathbf{T}$ & Set of discrete time steps, $t \in \{0, 1, \dots, T_{\max}\}$ \\
$\mathbf{A}$ & Set of directed arcs between distinct stations, i.e., $\mathbf{A} = \{(s, s') \in \mathbf{S} \times \mathbf{S} \mid s \neq s'\}$ \\
$\mathbf{R}_b$ & Set of pod routes within bus run $b$, indexed by $r \in \mathbf{R}_b$ \\
$\mathbf{R}$ & Set of all pod routes, $\mathbf{R} = \bigcup_{b \in \mathbf{B}} \mathbf{R}_b$ \\
\midrule
\multicolumn{2}{l}{\textbf{Parameters}} \\ \midrule
$d_{s,b}$ & Number of pods required at station $s$ in bus run $b$, $d_{s,b} \in \mathbb{Z}_+$ \\
$\tau_{s,s'}$ & Continuous travel time from station $s$ to $s'$, $\tau_{s,s'} \in \mathbb{Z}_+$ \\
$\Delta t$ & Duration of each discrete time step \\
$\widehat{\tau}_{s,s'}$ & Effective travel time: $\left\lceil \tau_{s,s'} / \Delta t \right\rceil \cdot \Delta t$ \\
$\underline{t}_r, \overline{t}_r$ & Discrete-time start and end bounds of pod route $r$ \\
$\mathrm{Start}(r), \mathrm{End}(r)$ & Start and end stations of pod route $r$ \\
$\mathrm{StartTime}_r, \mathrm{EndTime}_r$ & Continuous start and end times of pod route $r$ \\
$c^{\mathrm{fleet}}$ & Fixed cost per pod per service day, $c^{\mathrm{fleet}} \in \mathbb{R}_+$ \\
$c^{\mathrm{move}}$ & Cost per unit time of empty repositioning, $c_{\mathrm{move}} \in \mathbb{R}_+$ \\
$c^{\mathrm{park}}_s$ & Cost per unit time of parking at station $s$, $c^{\mathrm{park}}_s \in \mathbb{R}_+$ \\
\midrule
\multicolumn{2}{l}{\textbf{Decision Variables}} \\ \midrule
$x_{k,r}$ & Binary, 1 if pod $k$ serves pod route $r$, 0 otherwise, $x_{k,r} \in \{0,1\}$ \\
$y_{k,s,t}$ & Binary, 1 if pod $k$ is parked at station $s$ at time $t$, 0 otherwise, $y_{k,s,t} \in \{0,1\}$ \\
$z_{k,s,s',t}$ & Binary, 1 if pod $k$ moves empty from station $s$ to $s'$ starting at time $t$, 0 otherwise, $z_{k,s,s',t} \in \{0,1\}$ \\
$p_k$ & Binary, 1 if pod $k$ is used, 0 otherwise, $p_k \in \{0,1\}$ \\

\bottomrule
\end{tabular}
\end{table}

The model is subject to the following constraints.

\noindent \textbf{(1) Route assignment:} Each pod route must be served by exactly one pod to ensure full coverage of the demand. Formally, this is expressed as
\begin{equation}
\sum_{k \in \mathbf{K}} x_{k,r} = 1 \quad \forall r \in \mathbf{R}.
\end{equation}

\noindent \textbf{(2) Activity scheduling:}  
Each active pod must be assigned to exactly one activity, i.e., serving a route, parked at a station, or moving empty between stations, at every time step. %The first term checks whether the pod is actively serving a passenger route at time~$t$; the second term accounts for idle parking; and the third term captures ongoing empty travel between stations. 
Since repositioning moves can span multiple discrete time steps, all those that may have started earlier and are still in progress must be considered. The constraints are:
\begin{equation}
\sum_{\substack{r \in \mathbf{R}: \\ t \in [\underline{t}_r,\, \overline{t}_r]}} x_{k,r}
+ \sum_{s \in \mathbf{S}} y_{k,s,t}
+ \sum_{\substack{(s,s') \in \mathbf{A},\, t' \in \mathbf{T}: \\ t \in [t',\, t'+\widehat{\tau}_{s,s'})}} z_{k,s,s',t'}
= p_k \quad \forall k \in \mathbf{K},\; t \in \mathbf{T}.
\end{equation}

\noindent \textbf{(3) Transition feasibility:}  
To ensure realistic pod movements, each pod may begin an activity only if it is already present at the station at the corresponding time step. Presence at a station may result from being parked there previously, completing an empty trip to that station, or finishing a passenger-carrying route.

\vspace{0.5em}
\noindent
\textit{(a) Feasibility of starting a route:}  
A pod may begin a route $r$ at time $\underline{t}_r$ from station $\mathrm{Start}(r)$ only if it is already there. As mentioned above, this condition is satisfied if the pod was parked at that station at the previous time step, had just arrived via empty movement, or had just completed another route that ends at that station and time:
\begin{equation}
x_{k,r} \leq y_{k,s,t-1}
+ \sum_{\substack{(s',s) \in \mathbf{A},\, t' \in \mathbf{T}: \\ t = t' + \widehat{\tau}_{s',s}}} z_{k,s',s,t'}
+ \sum_{\substack{r' \in \mathbf{R}: \\ \mathrm{End}(r') = s,\, \overline{t}_{r'} = t}} x_{k,r'}
\quad \forall k \in \mathbf{K},\; r \in \mathbf{R}: \mathrm{Start}(r) = s,\; \underline{t}_r = t > 0.
\end{equation}

\vspace{0.5em}
\noindent
\textit{(b) Feasibility of starting an empty movement:}  
A pod can only begin empty movement from station $s$ at time $t$ if it is already at $s$. This presence can result from being parked, arriving via an earlier move, or completing a route that ends at $s$ at that time:
\begin{equation}
z_{k,s,s',t} \leq y_{k,s,t-1}
+ \sum_{\substack{(s'',s) \in \mathbf{A},\, t' \in \mathbf{T}: \\ t = t' + \widehat{\tau}_{s'',s}}} z_{k,s'',s,t'}
+ \sum_{\substack{r \in \mathbf{R}: \\ \mathrm{End}(r) = s,\, \overline{t}_r = t}} x_{k,r}
\quad \forall k \in \mathbf{K},\; (s,s') \in \mathbf{A},\; t \in \mathbf{T}: t>0.
\end{equation}

\vspace{0.5em}
\noindent
\textit{(c) Feasibility of parking:}  
A pod may be parked at station $s$ at time $t > 0$ only if it was already there in the previous time step, or has just arrived via movement or completed a route at that time:
\begin{equation}
y_{k,s,t} \leq y_{k,s,t-1}
+ \sum_{\substack{(s',s) \in \mathbf{A},\, t' \in \mathbf{T}: \\ t = t' + \widehat{\tau}_{s',s}}} z_{k,s',s,t'}
+ \sum_{\substack{r \in \mathbf{R}: \\ \mathrm{End}(r) = s,\, \overline{t}_r = t}} x_{k,r}
\quad \forall k \in \mathbf{K},\; s \in \mathbf{S},\; t \in \mathbf{T}: t>0.
\end{equation}

\noindent \textbf{(5) Binary variables:}
\begin{align}
x_{k,r} &\in \{0,1\} && \forall k \in \mathbf{K},\; r \in \mathbf{R}, \\
y_{k,s,t} &\in \{0,1\} && \forall k \in \mathbf{K},\; s \in \mathbf{S},\; t \in \mathbf{T}, \\
z_{k,s,s',t} &\in \{0,1\} && \forall k \in \mathbf{K},\; (s,s') \in \mathbf{A},\; t \in \mathbf{T}, \\
p_k &\in \{0,1\} && \forall k \in \mathbf{K}.
\end{align}

\section{Solution Approach}
\label{sec:Solution Approach}
This section presents the methodologies developed to solve the modular pod rebalancing problem. Two distinct solution approaches are proposed: Integrated Pod Routing and Hierarchical Pod Routing. The Integrated Pod Routing approach formulates the problem on a time-space network, where the optimal solution is obtained by finding the minimum-cost flow. Individual pod paths are then retrieved by decomposing this flow. In contrast, the Hierarchical Pod Routing approach first assigns sequences of routes to pods by solving a minimum disjoint path cover problem on a Directed Acyclic Graph (DAG). Subsequently, the exact routes for empty pod movements between these assigned sequences are determined by solving minimum-cost flow problems on time-space networks tailored for the respective time intervals.

\subsection{Integrated Pod Routing}
\label{sec:Integrated Pod Routing}
The Integrated Pod Routing approach offers a framework for optimizing pod assignment, movement, and parking to minimize the total cost. This section discusses the underlying processes for constructing the time-space network, which spans the entire planning horizon. A discussion of its dimensions is also included.

The time-space network is constructed as a directed graph $G = (V, E)$. Figure \ref{fig:time-space} illustrates a simple bus transit schedule along with its corresponding network. To manage visual complexity but keeping a clear depiction of key elements, certain edges and nodes are shown with lower transparency. The network structure of vertices and edges is described below. 

Vertices $v \in V$ are organized into several categories. Each vertex denotes a specific station at a discrete time step, and its category indicates whether a pod occupying that spatio-temporal point is in-service or between-service. Requester vertices are the spatio-temporal representation of the first station in a pod route. A pod at one of these vertices is in-service and begins its service segment. Requester vertices are illustrated with solid double blue circles in Figure \ref{subfig:time-space-network}. Similarly, Releaser vertices are the spatio-temporal representation of the last station in a pod route. A pod at one of these vertices is in-service and concludes its service segment. Releaser vertices are illustrated with dashed red circles in Figure \ref{subfig:time-space-network}. Middle vertices are the spatio-temporal representation of an intermediate station within an individual pod route. A pod at one of these vertices is in-service and maintains continuous service flow along its assigned segment. Middle vertices are illustrated with solid green circles in Figure \ref{subfig:time-space-network}.
\begin{figure}[!b]
    \centering
    \begin{subfigure}[b]{0.99\textwidth}
        \centering
        \begin{tikzpicture}
            \draw[->] (-0.5,0) -- (0.82\linewidth,0) node[right]{ };

            \foreach \x/\time in {0/8:00, 0.08333333333/8:01, 0.16666666666/8:02, 0.25/8:03, 0.33333333333/8:04, 0.41666666666/8:05, 0.5/8:06, 0.58333333333/8:07, 0.66666666666/8:08, 0.75/8:09, 0.83333333333/8:10} 
            {
                \draw (\x*\linewidth*0.95, -0.1) -- (\x*\linewidth*0.95, 0.1);
                \draw (\x*\linewidth*0.95 + 0.013*\linewidth, 0.5) node[above, rotate=90, font=\scriptsize] {\time};
            }

            \node[styleblack] (1) at (0*\linewidth*0.95, -1) {$s_1$};
            \node[styleblack] (2) at (0.16666666666*\linewidth*0.95, -1) {$s_2$};
            \node[styleblack] (3) at (0.41666666666*\linewidth*0.95, -1) {$s_3$};

            \node[styleblack] (5) at (0.41666666666*\linewidth*0.95, -2.5) {$s_1$};
            \node[styleblack] (6) at (0.58333333333*\linewidth*0.95, -2.5) {$s_2$};
            \node[styleblack] (7) at (0.83333333333*\linewidth*0.95, -2.5) {$s_3$};

            \node[font=\scriptsize] at (0*\linewidth*0.95, -0.5) {2};
            \node[font=\scriptsize] at (0.16666666666*\linewidth*0.95, -0.5) {2};
            \node[font=\scriptsize] at (0.41666666666*\linewidth*0.95, -0.5) {1};
            \node[font=\scriptsize] at (0.41666666666*\linewidth*0.95, -2) {1};
            \node[font=\scriptsize] at (0.58333333333*\linewidth*0.95, -2) {1};
            \node[font=\scriptsize] at (0.83333333333*\linewidth*0.95, -2) {1};

            \path [->, thick, gray] (1) edge (2);
            \path [->, thick, gray] (2) edge (3);

            \path [->, thick, gray] (5) edge (6);
            \path [->, thick, gray] (6) edge (7);
        \end{tikzpicture}
        \caption{ }
        \label{subfig:time-space-schedule}
    \end{subfigure}
    \par 
    \vspace{1cm}
    \begin{subfigure}[b]{0.99\textwidth}
        \centering
                \begin{tikzpicture}
            \draw[->] (-0.5,0) -- (0.82\linewidth,0) node[right]{ };
            
            \foreach \x/\time in {0/8:00, 0.08333333333/8:01, 0.16666666666/8:02, 0.25/8:03, 0.33333333333/8:04, 0.41666666666/8:05, 0.5/8:06, 0.58333333333/8:07, 0.66666666666/8:08, 0.75/8:09, 0.83333333333/8:10} 
            {
                \draw (\x*\linewidth*0.95, -0.1) -- (\x*\linewidth*0.95, 0.1);
                \draw (\x*\linewidth*0.95 + 0.013*\linewidth, 0.5) node[above, rotate=90, font=\scriptsize] {\time};
            }
            \node[styleblack] (0) at (\linewidth*-0.07, -6.5) {$S$};
            \node[styleblack] (1000) at (\linewidth*0.87, -6.5) {$T$};
            
            \node[styleblue] (100) at (0*\linewidth*0.95, -1.5) {$s_1$};
            
            \node[stylegreen] (2) at (0.16666666666*\linewidth*0.95, -1.5) {$s_2$};
            \node[stylered] (3) at (0.41666666666*\linewidth*0.95, -1.5) {$s_3$};
            
            \node[styleblue] (500) at (0*\linewidth*0.95, -2.5) {$s_1$};
            \node[stylered] (6) at (0.16666666666*\linewidth*0.95, -2.5) {$s_2$};
            
            \node[styleblue] (700) at (0.41666666666*\linewidth*0.95, -3) {$s_1$};
            \node[stylegreen] (8) at (0.58333333333*\linewidth*0.95, -3) {$s_2$};
            \node[stylered] (9) at (0.83333333333*\linewidth*0.95, -3) {$s_3$};
            
            \node[stylepurple] (11) at (0*\linewidth*0.95, -5) {$s_1$};
            \node[stylefade] (12) at (0.08333333333*\linewidth*0.95, -5) {$s_1$};
            \node[stylefade] (13) at (0.16666666666*\linewidth*0.95, -5) {$s_1$};
            \node[stylefade] (14) at (0.25*\linewidth*0.95, -5) {$s_1$};
            \node[stylefade] (15) at (0.33333333333*\linewidth*0.95, -5) {$s_1$};
            \node[stylepurple] (16) at (0.41666666666*\linewidth*0.95, -5) {$s_1$};
            \node[stylefade] (17) at (0.5*\linewidth*0.95, -5) {$s_1$};
            \node[stylefade] (18) at (0.58333333333*\linewidth*0.95, -5) {$s_1$};
            \node[stylefade] (19) at (0.66666666666*\linewidth*0.95, -5) {$s_1$};
            \node[stylepurple] (20) at (0.75*\linewidth*0.95, -5) {$s_1$};
            \node[stylepurple] (21) at (0.83333333333*\linewidth*0.95, -5) {$s_1$};
            
            \node[stylepurple] (24) at (0*\linewidth*0.95, -6.5) {$s_2$};
            \node[stylefade] (25) at (0.08333333333*\linewidth*0.95, -6.5) {$s_2$};
            \node[stylepurple] (26) at (0.16666666666*\linewidth*0.95, -6.5) {$s_2$};
            \node[stylefade] (27) at (0.25*\linewidth*0.95, -6.5) {$s_2$};
            \node[stylefade] (28) at (0.33333333333*\linewidth*0.95, -6.5) {$s_2$};
            \node[stylefade] (29) at (0.41666666666*\linewidth*0.95, -6.5) {$s_2$};
            \node[stylefade] (30) at (0.5*\linewidth*0.95, -6.5) {$s_2$};
            \node[stylepurple] (31) at (0.58333333333*\linewidth*0.95, -6.5) {$s_2$};
            \node[stylepurple] (32) at (0.66666666666*\linewidth*0.95, -6.5) {$s_2$};
            \node[stylefade] (33) at (0.75*\linewidth*0.95, -6.5) {$s_2$};
            \node[stylepurple] (34) at (0.83333333333*\linewidth*0.95, -6.5) {$s_2$};
            
            \node[stylepurple] (50) at (0*\linewidth*0.95, -8) {$s_3$};
            \node[stylefade] (51) at (0.08333333333*\linewidth*0.95, -8) {$s_3$};
            \node[stylefade] (52) at (0.16666666666*\linewidth*0.95, -8) {$s_3$};
            \node[stylefade] (53) at (0.25*\linewidth*0.95, -8) {$s_3$};
            \node[stylefade] (54) at (0.33333333333*\linewidth*0.95, -8) {$s_3$};
            \node[stylepurple] (55) at (0.41666666666*\linewidth*0.95, -8) {$s_3$};
            \node[stylefade] (56) at (0.5*\linewidth*0.95, -8) {$s_3$};
            \node[stylefade] (57) at (0.58333333333*\linewidth*0.95, -8) {$s_3$};
            \node[stylefade] (58) at (0.66666666666*\linewidth*0.95, -8) {$s_3$};
            \node[stylefade] (59) at (0.75*\linewidth*0.95, -8) {$s_3$};
            \node[stylepurple] (60) at (0.83333333333*\linewidth*0.95, -8) {$s_3$};
            
            \path [->,thick] (100) edge (2);
            
            \path [->,thick] (2) edge (3);
            \path [->,thick] (500) edge (6);
            
            \path [->,thick] (700) edge (8);
            \path [->,thick] (8) edge (9);

            \path [->,thin, opacity=0.2] (11) edge (12);
            \path [->,thin, opacity=0.2] (12) edge (13);
            \path [->, thin, opacity=0.2] (13) edge (14);
            \path [->, thin, opacity=0.2] (14) edge (15);
            \path [->, thin, opacity=0.2] (15) edge (16);
            \path [->, thin, opacity=0.2] (16) edge (17);
            \path [->, thin, opacity=0.2] (17) edge (18);
            \path [->, thin, opacity=0.2] (18) edge (19);
            \path [->, thin, opacity=0.2] (19) edge (20);
            \path [->, thin, opacity=0.2] (20) edge (21);
            
            \path [->, thin, opacity=0.2] (24) edge (25);
            \path [->, thin, opacity=0.2] (25) edge (26);
            \path [->, thin, opacity=0.2] (26) edge (27);
            \path [->, thin, opacity=0.2] (27) edge (28);
            \path [->, thin, opacity=0.2] (28) edge (29);
            \path [->, thin, opacity=0.2] (29) edge (30);
            \path [->, thin, opacity=0.2] (30) edge (31);
            \path [->, thick, Orange] (31) edge (32);
            \path [->, thin, opacity=0.2] (32) edge (33);
            \path [->, thin, opacity=0.2] (33) edge (34);
            
            \path [->, thin, opacity=0.2] (50) edge (51);
            \path [->, thin, opacity=0.2] (51) edge (52);
            \path [->, thin, opacity=0.2] (52) edge (53);
            \path [->, thin, opacity=0.2] (53) edge (54);
            \path [->, thin, opacity=0.2] (54) edge (55);
            \path [->, thin, opacity=0.2] (55) edge (56);
            \path [->, thin, opacity=0.2] (56) edge (57);
            \path [->, thin, opacity=0.2] (57) edge (58);
            \path [->, thin, opacity=0.2] (58) edge (59);
            \path [->, thin, opacity=0.2] (59) edge (60);
            
            \path [->,thin, opacity=0.2,black, out = 345, in = 160] (11) edge (26);
            \path [->,thin, opacity=0.2,black, out = 345, in = 160] (12) edge (27);
            \path [->,thin, opacity=0.2,black, out = 345, in = 160] (13) edge (28);
            \path [->,thin, opacity=0.2,black, out = 345, in = 160] (14) edge (29);
            \path [->,thin, opacity=0.2,black, out = 345, in = 160] (15) edge (30);
            \path [->,thin, opacity=0.2,black, out = 345, in = 160] (16) edge (31);
            \path [->,thin, opacity=0.2,black, out = 345, in = 160] (17) edge (32);
            \path [->,thin, opacity=0.2,black, out = 345, in = 160] (18) edge (33);
            \path [->,thin, opacity=0.2,black, out = 345, in = 160] (19) edge (34);
            
            \path [->,thin, opacity=0.2, black, out = 345, in = 160] (24) edge (53);
            \path [->,thin, opacity=0.2,black, out = 345, in = 160] (25) edge (54);
            \path [->,thin, opacity=0.2,black, out = 345, in = 160] (26) edge (55);
            \path [->,thin, opacity=0.2,black, out = 345, in = 160] (27) edge (56);
            \path [->,thin, opacity=0.2,black, out = 345, in = 160] (28) edge (57);
            \path [->,thin, opacity=0.2,black, out = 345, in = 160] (29) edge (58);
            \path [->,thin, opacity=0.2,black, out = 345, in = 160] (30) edge (59);
            \path [->,thick, Fuchsia, out = 345, in = 160] (31) edge (60);
            
            \path [->,thin, opacity=0.2,black, out = 15, in = 200] (50) edge (27);
            \path [->,thin, opacity=0.2,black, out = 15, in = 200] (51) edge (28);
            \path [->,thin, opacity=0.2,black, out = 15, in = 200] (52) edge (29);
            \path [->,thin, opacity=0.2,black, out = 15, in = 200] (53) edge (30);
            \path [->,thin, opacity=0.2,black, out = 15, in = 200] (54) edge (31);
            \path [->,thin, opacity=0.2,black, out = 15, in = 200] (55) edge (32);
            \path [->,thin, opacity=0.2,black, out = 15, in = 200] (56) edge (33);
            \path [->,thin, opacity=0.2,black, out = 15, in = 200] (57) edge (34);
            
            \path [->,thin, opacity=0.2,black, out = 15, in = 200] (24) edge (13);
            \path [->,thin, opacity=0.2,black, out = 15, in = 200] (25) edge (14);
            \path [->,thin, opacity=0.2,black, out = 15, in = 200] (26) edge (15);
            \path [->,thin, opacity=0.2,black, out = 15, in = 200] (27) edge (16);
            \path [->,thin, opacity=0.2,black, out = 15, in = 200] (28) edge (17);
            \path [->,thin, opacity=0.2,black, out = 15, in = 200] (29) edge (18);
            \path [->,thin, opacity=0.2,black, out = 15, in = 200] (30) edge (19);
            \path [->,thick,Fuchsia, out = 15, in = 200] (31) edge (20);
            \path [->,thin, opacity=0.2,black, out = 15, in = 200] (32) edge (21);
            
            \path [->,thick, black, dash dot, out = 240, in = 120] (6) edge (26);
            \path [->,thick, black, dash dot, out = 245, in = 115] (3) edge (55);
            \path [->,thick, black, dash dot, out = 300, in = 65] (9) edge (60);
            \path [->,thick, black,dashed, out = 90, in = 270] (11) edge (500);
            \path [->,thick, black,dashed, out = 135, in = 230] (11) edge (100);
            \path [->,thick, black,dashed, out = 90, in = 270] (16) edge (700);
            
            \path [->,thick] (0) edge (11);
            \path [->,thick] (0) edge (24);
            \path [->,thick] (0) edge (50);
            
            \path [->,thick] (21) edge (1000);
            \path [->,thick] (34) edge (1000);
            \path [->,thick] (60) edge (1000);
            \path [->,thick, black, out = -80, in = -100, looseness = 0.6] (1000) edge (0);
        \end{tikzpicture}   
        \caption{ }
        \label{subfig:time-space-network}
    \end{subfigure}
    \caption{(a) Illustration of a bus schedule with two bus runs. The minimum number of required pods for each demand node is indicated above the node. (b) Time-space network of the bus schedule in Figure \ref{subfig:time-space-schedule}. Vertices are shown as Requester (solid double blue); Middle (solid green); Standby (dashed-dot); $S$ (Source); and $T$ (Sink). Edges are represented by Standby-to-Requester (dashed lines); Releaser-to-Standby (dashed-dot lines); Standby-to-Standby parking (solid orange); and Standby-to-Standby movement (solid purple). $\Delta t = 1$ minute.}
    \label{fig:time-space}
\end{figure}
To define the flow balance requirements, requester vertices are set to have a demand of one, as they need one pod. Conversely, Releaser vertices are set to have a supply of one, as they release one pod. Standby vertices are designated waiting and transfer areas within the network, intended for pods that are in an idle state. A standby vertex is created for each physical station at every discrete time interval, from which a pod can initiate parking or empty movement. Standby vertices are illustrated with dashed dot circles in Figure \ref{subfig:time-space-network}. Finally, Auxiliary Vertices are introduced to complete the network's overall flow structure: A source vertex is added as the origin of all pod flow, and a sink vertex serves as the destination for all pod flow within the planning horizon. These are denoted by nodes \(S\) and \(T\), respectively, in Figure \ref{subfig:time-space-network}.

Edges represent possible transitions for a pod from one vertex to another, denoting a change in its location and time, or operational status. Each arc is associated with a specific cost. Each pod route, $r \in \mathbf{R}$, represents the in-service journey and consists of a sequence of vertices connected by edges that incur zero cost. The rest of the edges are categorized by the nature of the transition they represent.

For each pod route $r \in \mathbf{R}$, a Releaser-to-Standby edge connects the Releaser vertex at station $End(r)$ at time $\overline{t}_r$ to the Standby vertex at the same station and time step. They represent a pod transitioning from an in-service state to a between-service state at a station. Each such edge is assigned a cost of $\left( \overline{t}_r - \mathrm{EndTime}_r \right) \cdot c^{\mathrm{park}}_{\mathrm{End}(r)}$. This value accounts for the idle time created by mapping the route's continuous end time to the discrete time grid. These edges are shown with dashed arrows in Figure \ref{subfig:time-space-network}. Similarly, for each pod route $r \in \mathbf{R}$, a Standby-to-Requester edge leaves the Standby vertex at station $Start(r)$ at time $t^{-}_r$ and goes to the Requester vertex for that same route $r$. They represent a pod transitioning from a between-service state to an in-service state to begin serving its assigned route. Each such edge is assigned a cost of $\left( \mathrm{StartTime}_r - \underline{t}_r \right) \cdot c^{\mathrm{park}}_{\mathrm{Start}(r)}$. These edges are shown with dashed-dot arrows in Figure \ref{subfig:time-space-network}.

Standby-to-Standby Movement edges represent empty pods traveling between different stations. From each standby vertex $(s, t)$, edges are established to standby vertices $(s', t + \widehat{\tau}_{s,s'})$ for all other stations $s' \in \mathbf{S}, s' \neq s$, denoting the quickest possible travel from $s$ to $s'$. They incur a cost of $c^{\text{move}} \cdot \widehat{\tau}_{s,s'}$. Two standby-to-standby movement edges are highlighted in purple in Figure \ref{subfig:time-space-network}. To keep the figure simple, edges between stations \(s_1\) and \(s_3\) are not shown. Standby-to-Standby Parking edges represent empty pods parking at the same station. An edge connects a standby vertex $(s, t)$ to a standby vertex at the same station in the next time interval, $(s, t+1)$. These edges have a cost of $c_{s}^{\text{Park}} \cdot \Delta t$. An standby-to-standby parking edge is highlighted in orange in Figure \ref{subfig:time-space-network}.

Finally, auxiliary edges manage the overall flow of pods within the network. Source-to-Standby edges connect the source vertex to the standby vertices at $t=0$. They represent the initial availability of pods in the system. Each such edge has a cost of $c^{\text{fleet.}}$. Sink-to-Standby edges connect the standby vertices at $T_{\max}$ to the sink vertex. They represent pods concluding their journey within the planning horizon. Each such arc has a cost of zero. A Sink-to-Source edge then closes the circulation, connecting the Sink Vertex back to the Source Vertex, with a cost of zero.

The minimum cost circulation in network \(G\) represents the solution to our optimization problem. The flow value corresponds to the optimal fleet size. To extract the individual pod routes, we recursively trace a directed path from \( S \) to \( T \), and then remove that path from the flow network. The time-space network can be reduced in size without affecting the final solution to the optimization problem. Each pod route connects its Requester Vertex to its Releaser Vertex through Middle Vertices. Since these Middle Vertices serve as intermediate points on an already defined and zero-cost in-service path, and do not represent decision points, costs, or capacities that affect the optimization, they can be bypassed. Consequently, all Middle Vertices and their incident edges can be removed. This results in a Requester Vertex and a Releaser Vertex without any edge in between.

For this reduced network, the number of vertices is given by $2|\mathbf{R}| + |\mathbf{S}| \cdot |\mathbf{T}| + 2$. Here, $2|\mathbf{R}|$ accounts for one Requester and one Releaser vertex for each pod route, $|\mathbf{S}| \cdot |\mathbf{T}|$ represents the number of Standby vertices, and 2 accounts for the source and sink vertices. The number of edges in this reduced network is approximated by $2|\mathbf{S}| + |\mathbf{S}|^2 \cdot (|\mathbf{T}|-1) + 2|\mathbf{R}| + 1$. This includes $2|\mathbf{S}|$ for edges connecting the source and sink vertices to the initial and final standby vertices, $|\mathbf{S}|^2 \cdot (|\mathbf{T}|-1)$ for the maximum number of edges between Standby vertices, $2|\mathbf{R}|$ for edges connecting pod routes to standby vertices via their Requester and Releaser points, and one edge for the Sink-to-Source circulation loop.

\subsection{Hierarchical Pod Routing}

The Hierarchical Pod Routing approach addresses the modular pod rebalancing problem through a two-stage optimization process. This strategy prioritizes the determination of the minimum required fleet size before optimizing the details of individual pod movements. In the first stage, the minimum number of pods necessary to cover all scheduled trips and their high-level assignments to specific bus runs are identified. Subsequently, the second stage focuses on optimizing the detailed routes for these assigned pods. This optimization specifically considers parking and empty movement costs incurred during the between-service periods.

\subsubsection{Stage 1: Minimum Fleet Size Determination}

The first stage of the Hierarchical Pod Routing approach focuses on determining the minimum fleet size required to cover all scheduled bus runs. This is achieved by formulating the problem as a Minimum Disjoint Path Cover on a Directed Acyclic Graph (DAG). The following paragraphs provide the details of the construction of this DAG and explain how its solution yields the optimal number of pods needed for in-service operations.

The graph for this stage, denoted as $H$, utilizes the concepts of Requester, Middle, and Releaser vertices defined in Section \ref{sec:Integrated Pod Routing}. While the Middle vertices are included in the graph, it will be demonstrated later that they can be removed without impacting the final solution.

Pods departing from Releaser vertices are empty and thus available to serve other demand points that require an empty pod. An edge is therefore established from any Releaser vertex (e.g., representing the end of pod route $r_1$) to any Requester vertex (e.g., representing the start of pod route $r_2$) if the continuous travel time from $End(r_1)$ to $Start(r_2)$ ($\tau_{End(r_1), Start(r_2)}$) does not exceed the time difference between their continuous scheduled service times ($\mathrm{StartTime}_{r_2} - \mathrm{EndTime}_{r_1}$). The graph constructed using this approach from the bus runs of Figure \ref{fig:decomposition} is presented in Figure \ref{fig:H}. We claim that network \( H \) is a DAG, and that the minimum fleet size that addresses the demand corresponds to the minimum number of paths that cover all nodes in network \( H \) without sharing any nodes.

\begin{figure}[!b]
    \centering
    \begin{tikzpicture}
\draw[->] (-0.5,0) -- (0.8*\linewidth,0) node[right]{ };

% Define the custom times and their positions
\foreach \x/\time in {0/13:00, 0.06666666666/13:02, 0.1/13:03, 0.16666666666/13:05, 
    0.23333333333/13:07, 0.33333333333/13:10, 0.43333333333/13:13, 
    0.46666666666/13:14, 0.56666666666/13:17,0.6/13:18, 0.66666666666/13:20, 
    0.76666666666/13:23, 0.8/13:24, 0.83333333333/13:25} 
{
    \draw (\x*\linewidth*0.9, -0.1) -- (\x*\linewidth*0.9, 0.1);
    \draw (\x*\linewidth*0.9 + 0.013*\linewidth, 0.5) node[above, rotate=90, font=\scriptsize] {\time};
}

\node[styleblue] (1) at (0*\linewidth*0.9, -1) {$s_1$};
\node[styleblue] (2) at (0*\linewidth*0.9, -2) {$s_1$};
\node[styleblue] (3) at (0*\linewidth*0.9, -3) {$s_1$};

\node[stylegreen] (4) at (0.06666666666*\linewidth*0.9, -1) {$s_2$};
\node[stylegreen] (5) at (0.06666666666*\linewidth*0.9, -2) {$s_2$};
\node[stylered] (6) at (0.06666666666*\linewidth*0.9, -3) {$s_2$};

\node[stylegreen] (7) at (0.16666666666*\linewidth*0.9, -1) {$s_3$};
\node[stylegreen] (8) at (0.16666666666*\linewidth*0.9, -2) {$s_3$};

\node[stylered] (9) at (0.23333333333*\linewidth*0.9, -1) {$s_4$};
\node[stylered] (10) at (0.23333333333*\linewidth*0.9, -2) {$s_4$};

\node[styleblue] (11) at (0.6*\linewidth*0.9, -1) {$s_1$};
\node[styleblue] (12) at (0.6*\linewidth*0.9, -2) {$s_1$};

\node[stylegreen] (13) at (0.66666666666*\linewidth*0.9, -1) {$s_2$};
\node[stylered] (14) at (0.66666666666*\linewidth*0.9, -2) {$s_2$};

\node[stylegreen] (15) at (0.76666666666*\linewidth*0.9, -1) {$s_3$};
\node[stylered] (16) at (0.83333333333*\linewidth*0.9, -1) {$s_4$};

\node[styleblue] (17) at (0.43333333333*\linewidth*0.9, -3) {$s_5$};
\node[stylegreen] (18) at (0.56666666666*\linewidth*0.9, -3) {$s_6$};
\node[stylegreen] (20) at (0.66666666666*\linewidth*0.9, -3) {$s_3$};
\node[stylered] (22) at (0.8*\linewidth*0.9, -3) {$s_7$};

\node[styleblue] (19) at (0.56666666666*\linewidth*0.9, -4) {$s_6$};
\node[stylered] (21) at (0.66666666666*\linewidth*0.9, -4) {$s_3$};

\node[styleblue] (23) at (0.23333333333*\linewidth*0.9, -5) {$s_6$};
\node[stylered] (24) at (0.33333333333*\linewidth*0.9, -5) {$s_3$};

\node[styleblue] (25) at (0.1*\linewidth*0.9, -7) {$s_5$};
\node[stylegreen] (27) at (0.23333333333*\linewidth*0.9, -7) {$s_6$};
\node[stylegreen] (29) at (0.33333333333*\linewidth*0.9, -7) {$s_3$};
\node[stylered] (31) at (0.46666666666*\linewidth*0.9, -7) {$s_7$};

\node[styleblue] (26) at (0.1*\linewidth*0.9, -6) {$s_5$};
\node[stylegreen] (28) at (0.23333333333*\linewidth*0.9, -6) {$s_6$};
\node[stylered] (30) at (0.33333333333*\linewidth*0.9, -6) {$s_3$};

\path [->,ultra thick, black] (1) edge (4);
\path [->,ultra thick, black] (4) edge (7);
\path [->,ultra thick, black] (7) edge (9);
\path [->,ultra thick, black] (2) edge (5);
\path [->,ultra thick, black] (5) edge (8);
\path [->,ultra thick, black] (8) edge (10);
\path [->,ultra thick, black] (3) edge (6);
\path [->,thick, gray, out=0, in=180] (6) edge (11);
\path [->,thick, gray, out=0, in=180] (6) edge (12);
\path [->,thick, gray, out=0, in=180] (6) edge (23);
\path [->,thick, gray, out=0, in=200] (6) edge (19);
\path [->,ultra thick, black, out=0, in=180] (11) edge (13);
\path [->,ultra thick, black,out=0, in=180] (13) edge (15);
\path [->,ultra thick, black, out=0, in=180] (15) edge (16);
\path [->,ultra thick, black, out=0, in=180] (12) edge (14);
\path [->,thick, gray, out=-60, in=200] (9) edge (19);
\path [->,thick, gray, out=-45, in=200] (10) edge (19);

\path [->,thick, gray, out=0, in=200] (30) edge (19);
\path [->,thick, gray, out=0, in=180] (24) edge (11);
\path [->,thick, gray, out=0, in=180] (30) edge (12);
\path [->,thick, gray, out=0, in=180] (10) edge (11);
\path [->,thick, gray, out=0, in=180] (9) edge (12);
\path [->,ultra thick, black, out=0, in=180] (9) edge (11);
\path [->,ultra thick, black, out=0, in=180] (10) edge (12);
\path [->,ultra thick, black, out=0, in=180] (6) edge (17);
\path [->,ultra thick, black, out=0, in=180] (17) edge (18);
\path [->,ultra thick, black, out=0, in=180] (18) edge (20);
\path [->,ultra thick, black, out=0, in=180] (20) edge (22);
\path [->,ultra thick, black, out=0, in=180] (19) edge (21);
\path [->,ultra thick, black, out=0, in=180] (23) edge (24);
\path [->,ultra thick, black, out=0, in=200] (24) edge (19);
\path [->,ultra thick, black, out=0, in=180] (25) edge (27);
\path [->,ultra thick, black, out=0, in=180] (27) edge (29);
\path [->,ultra thick, black, out=0, in=180] (29) edge (31);
\path [->,ultra thick, black, out=0, in=180] (26) edge (28);
\path [->,ultra thick, black, out=0, in=180] (28) edge (30);
\end{tikzpicture}
    \caption{Graph \( H \) representing the bus transit schedule in Figure~\ref{fig:decomposition}. Edges indicate feasible pod transitions. The minimum disjoint path cover is emphasized with thick black lines.}
    \label{fig:H}
\end{figure}
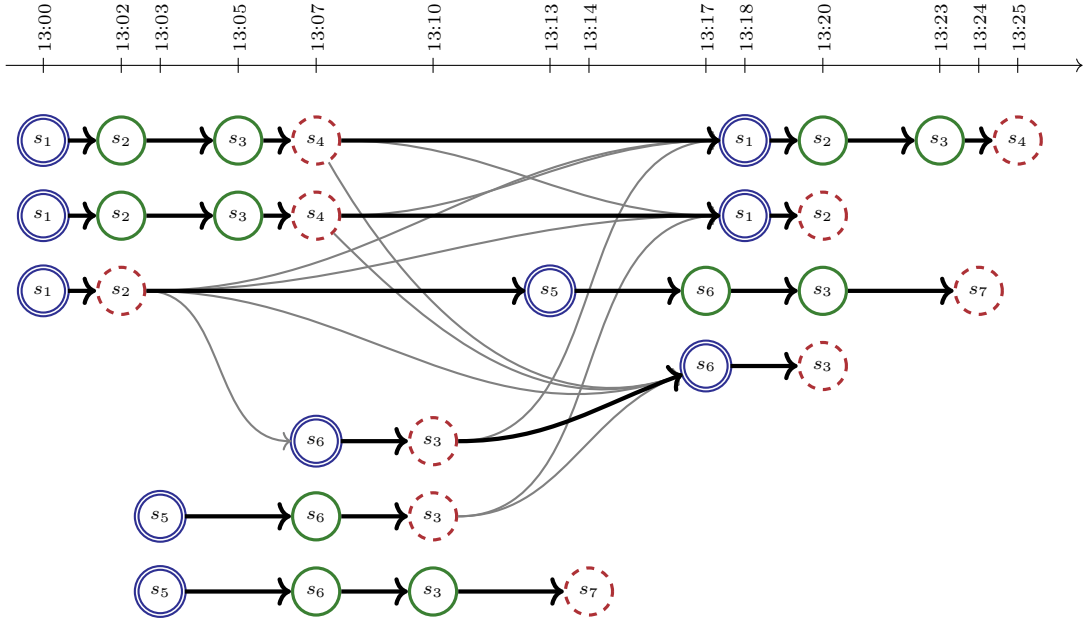

\begin{theorem}
Network \( H \) is a DAG.
\end{theorem}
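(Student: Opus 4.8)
The plan is to produce a real-valued potential $\theta$ on the vertices of $H$ that is non-decreasing along every arc and strictly increasing along enough arcs to forbid directed cycles. Each vertex $v$ of $H$ is, by construction, the spatio-temporal image of a single station visit inside some pod route $r$; since $r$ is a contiguous segment of a bus run whose stations carry fixed scheduled times, $v$ inherits a well-defined continuous time stamp $\theta(v)$: for a Requester vertex this is $\mathrm{StartTime}_r$, for a Releaser vertex it is $\mathrm{EndTime}_r$, and for a Middle vertex it is the scheduled time of that intermediate stop.

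First I would classify the arcs of $H$ and check monotonicity of $\theta$ on each class. (i) \emph{In-service arcs} link consecutive vertices of one pod route; along a run a bus visits its stations in strictly increasing time order, so $\theta$ strictly increases along every in-service arc. (ii) \emph{Between-service arcs} go from the Releaser vertex of some route $r_1$ to the Requester vertex of some route $r_2$, and by definition such an arc exists only when $\tau_{\mathrm{End}(r_1),\mathrm{Start}(r_2)} \le \mathrm{StartTime}_{r_2} - \mathrm{EndTime}_{r_1}$, hence $\theta(\text{head}) - \theta(\text{tail}) \ge \tau_{\mathrm{End}(r_1),\mathrm{Start}(r_2)} \ge 0$, so $\theta$ is non-decreasing along every between-service arc. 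Since $V(H)$ is finite, it then only remains to exclude directed cycles.

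Next I would suppose, toward a contradiction, that $H$ contains a directed cycle $v_0 \to v_1 \to \cdots \to v_\ell = v_0$. Chaining the inequalities $\theta(v_i) \le \theta(v_{i+1})$ around the cycle together with $\theta(v_0) = \theta(v_\ell)$ forces $\theta$ to be constant on the cycle; in particular the cycle can contain no in-service arc, because those increase $\theta$ strictly. Hence every arc of the cycle is a between-service arc. But the head of a between-service arc is a Requester vertex, and the only arcs leaving a (pure) Requester vertex are in-service arcs, so the cycle cannot be extended past its first vertex — a contradiction. Therefore $H$ is acyclic, and being finite it is a DAG.

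The step I expect to need the most care is this last one: cleanly ruling out "flat" cycles on which $\theta$ is constant. The clean argument is the structural fact that the out-neighbourhood of a Requester vertex consists only of in-service arcs; the one loophole is a pod route consisting of a single station, where a vertex is simultaneously Requester and Releaser, which I would dispatch by invoking positivity of inter-station travel times (equivalently, the standing assumption that two distinct service events do not occur at the very same station at the very same instant), so that between-service arcs between distinct spatio-temporal points also increase $\theta$ strictly. Everything else — fixing the potential, verifying the two arc inequalities, and the telescoping argument on a putative cycle — is routine.
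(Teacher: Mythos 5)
Your proof is correct and follows essentially the same route as the paper's: both arguments combine time monotonicity along arcs (which you formalize as a potential $\theta$ that is strictly increasing on in-service arcs and non-decreasing on between-service arcs) with the structural observation that the arc types available at Requester versus Releaser vertices make a purely between-service cycle impossible. Your version is the more rigorous write-up, and it explicitly flags and patches the one degenerate case the paper glosses over — a single-station pod route whose vertex is simultaneously Requester and Releaser — which is handled by the standing non-degeneracy assumption you invoke.
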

\begin{proof}
    To prove that network \( H \) is acyclic, we examine the structure of its edges. Edges originating from requester and middle nodes can only connect to other nodes within the same bus run. A directed cycle involving requester and middle nodes would imply that pods are able to travel backward in time, which is physically impossible. Consequently, the only remaining possibility would be the existence of a cycle composed solely of releaser nodes. However, since there are no edges between releaser nodes, such a cycle cannot exist. Therefore, network \( H \) does not contain any directed cycles, and it is, therefore, a DAG.
\end{proof}

Minimizing the fleet size in the transit network directly translates into finding the minimum disjoint path cover on graph \(H\). In this context, every path represents the complete journey of a single pod. Our objective, therefore, is to find the smallest group of these paths, each representing a distinct pod's journey. This group must cover every demand node exactly once, and its paths must be disjoint, meaning no two pods overlap on any service task. The edges of the graph guarantee that any constructed path represents a physically feasible sequence of service for a pod within the transit network.

While finding the minimum disjoint path cover in an arbitrary graph is NP-hard \citep{franzblau2002optimal}, the problem simplifies to a maximum matching problem in DAGs \citep{fulkerson1956note}. This allows the solution to be computed in polynomial time. To find a minimum disjoint path cover in a DAG \( H = (V, E) \), we adopt the method described by \cite{bertossi1987some}. The approach constructs a bipartite graph by replacing each vertex \( v \in V \) with two vertices \( v' \) and \( v'' \). For every edge \( (v_i, v_j) \in E \) in the original DAG, an undirected edge is added between \( v_i'' \) and \( v_j' \) in the bipartite graph. A maximum matching is then computed in this bipartite graph. The number of vertex-disjoint paths required to cover all nodes in the original DAG corresponds to the number of unmatched nodes in the matching. An illustration of this transformation is provided in Figure~\ref{fig:BipartiteMatching}.

\tikzset{
    stylecircle/.style={draw=black, fill=none, very thick, solid, circle, text=black, minimum size=3mm, font=\scriptsize},
    stylesquare/.style={draw=black,fill=none,very thick,solid,regular polygon,regular polygon sides=4,text=black,inner sep=0pt,minimum size=8mm,font=\scriptsize,rotate=0},
    stylediamond/.style={draw,very thick,shape=diamond,minimum size=7mm,inner sep=2pt,text=black,font=\scriptsize, shape border rotate=90}
    }
\begin{figure}[!b]
    \centering
    
    \begin{minipage}[t]{0.27\textwidth}
        \centering
        \resizebox{\linewidth}{!}{
        \begin{tikzpicture}
            \node[stylecircle] (1) at (0,0) {$A$};
            \node[stylecircle] (2) at (0,2.5) {$B$};
            \node[stylecircle] (3) at (1,1.25) {$C$};
            \node[stylecircle] (4) at (3,1.25) {$D$};
            \node[stylecircle] (5) at (2,2.5) {$E$};
            \node[stylecircle] (6) at (1,4) {$F$};
            \node[stylecircle] (7) at (3,4) {$G$};

            \path[->, thick] (1) edge (3);
            \path[->, thick] (2) edge (3); 
            \path[->, thick] (3) edge (4);
            \path[->, thick] (5) edge (4);
            \path[->, thick] (5) edge (6);
            \path[->, thick] (5) edge (7);
            \path[->, thick] (6) edge (7);
        \end{tikzpicture}
        }
        \caption*{(a)}
    \end{minipage}
    \hfill
    \vrule width 1pt % <--- vertical separator
    \hfill
    \begin{minipage}[t]{0.4\textwidth}
        \centering
        \resizebox{\linewidth}{!}{
        \begin{tikzpicture}
            \node[stylesquare] (1) at (0.8,0) {$A''$};
            \node[stylediamond] (9) at (0,0) {$A'$};

            \node[stylesquare] (2) at (0.8,2.5) {$B''$};
            \node[stylediamond] (10) at (0,2.5) {$B'$};

            \node[stylediamond] (3) at (1.5,1.25) {$C'$};
            \node[stylesquare] (8) at (2.3,1.25) {$C''$};

            \node[stylediamond] (4) at (4.3,1.25) {$D'$};
            \node[stylesquare] (11) at (5.1,1.25) {$D''$};

            \node[stylesquare] (5) at (3.3,2.5) {$E''$};
            \node[stylediamond] (12) at (2.5,2.5) {$E'$};

            \node[stylesquare] (6) at (2.3,4) {$F''$};
            \node[stylediamond] (13) at (1.5,4) {$F'$};

            \node[stylediamond] (7) at (4.3,4) {$G'$};
            \node[stylesquare] (14) at (5.1,4) {$G''$};

            \path[-, gray] (1) edge (3);
            \path[-, ultra thick] (2) edge (3); 
            \path[-, ultra thick] (8) edge (4);
            \path[-, gray] (5) edge (4);
            \path[-, ultra thick] (5) edge (13);
            \path[-, gray] (5) edge (7);
            \path[-, ultra thick] (6) edge (7);
        \end{tikzpicture}
        }
        \caption*{(b)}
    \end{minipage}
    \hfill
    \vrule width 1pt % <--- vertical separator
    \hfill
    \begin{minipage}[t]{0.27\textwidth}
        \centering
        \resizebox{\linewidth}{!}{
        \begin{tikzpicture}
            \node[stylecircle, draw = frenchblue, text = frenchblue] (1) at (0,0) {$A$};
            \node[stylecircle, draw = cadmiumgreen, text = cadmiumgreen] (2) at (0,2.5) {$B$};
            \node[stylecircle, draw = cadmiumgreen, text = cadmiumgreen] (3) at (1,1.25) {$C$};
            \node[stylecircle, draw = cadmiumgreen, text = cadmiumgreen] (4) at (3,1.25) {$D$};
            \node[stylecircle, draw =chocolate(traditional), text = chocolate(traditional)] (5) at (2,2.5) {$E$};
            \node[stylecircle, draw =chocolate(traditional) , text = chocolate(traditional)] (6) at (1,4) {$F$};
            \node[stylecircle, draw =chocolate(traditional) , text = chocolate(traditional)] (7) at (3,4) {$G$};

            \path[->] (1) edge (3);
            \path[->, ultra thick, draw = cadmiumgreen] (2) edge (3); 
            \path[->, ultra thick, draw = cadmiumgreen] (3) edge (4);
            \path[->] (5) edge (4);
            \path[->, ultra thick, draw =chocolate(traditional)] (5) edge (6);
            \path[->] (5) edge (7);
            \path[->, ultra thick, draw =chocolate(traditional)] (6) edge (7);
        \end{tikzpicture}
        }
        \caption*{(c)}
    \end{minipage}

    \caption{(a) The input DAG. (b) Bipartite transformation with bold edges forming a maximum matching. (c) Minimum path cover: A; B~$\rightarrow$~C~$\rightarrow$~D; and E~$\rightarrow$~F~$\rightarrow$~G.}

    \label{fig:BipartiteMatching}
\end{figure}

The structure of graph $H$ makes it unnecessary to double the number of nodes, as is required by the bipartite transformation. It also enables a reduction in the graph size before solving the matching problem. Specifically, any path in a minimum disjoint path cover that includes a requester node must inevitably pass through all its middle nodes and reach the corresponding releaser node; otherwise, the path cover would not be minimum. As a result, the requester node alone is sufficient to determine the sequence of remaining nodes in the path. A modified version of $H$ is obtained by removing all middle nodes and all edges that directly connect a requester node to its corresponding releaser node. The remaining directed edges are then replaced with undirected edges. The resulting graph is bipartite, with requester nodes in one partition and releaser nodes in the other. A maximum matching in this bipartite graph can then be used to reconstruct the minimum disjoint path cover in the original network. Figure~\ref{fig:reducedH} shows the bipartite graph obtained by removing all middle nodes and all requester-to-releaser edges from the graph in Figure~\ref{fig:H}.

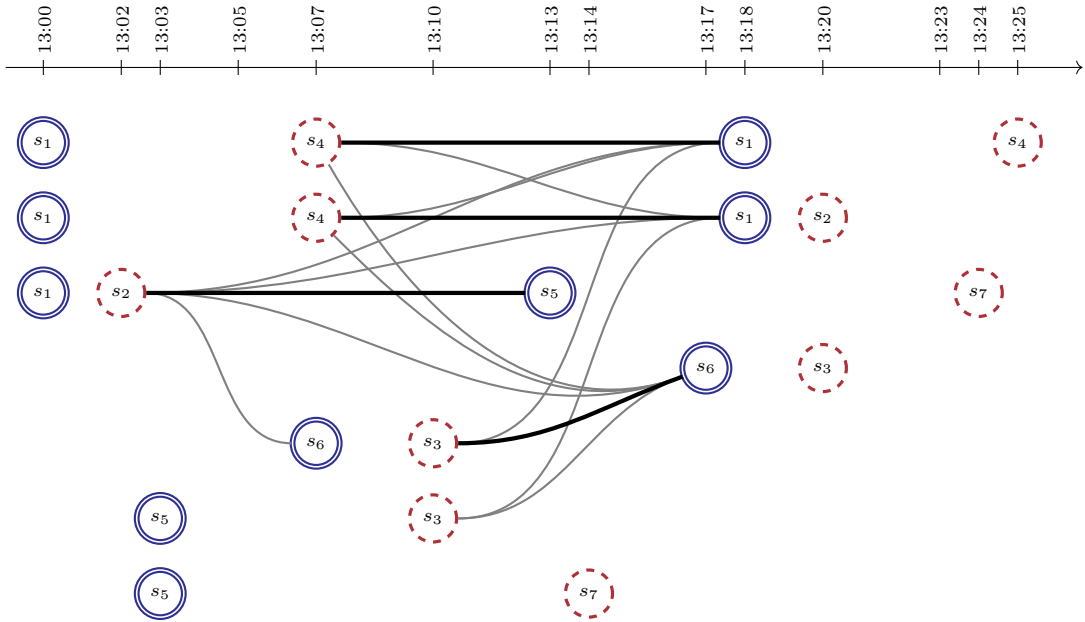
\begin{figure}[!t]
    \centering
    \begin{tikzpicture}
\draw[->] (-0.5,0) -- (0.8*\linewidth,0) node[right]{ };

% Define the custom times and their positions
\foreach \x/\time in {0/13:00, 0.06666666666/13:02, 0.1/13:03, 0.16666666666/13:05, 
    0.23333333333/13:07, 0.33333333333/13:10, 0.43333333333/13:13, 
    0.46666666666/13:14, 0.56666666666/13:17,0.6/13:18, 0.66666666666/13:20, 
    0.76666666666/13:23, 0.8/13:24, 0.83333333333/13:25} 
{
    \draw (\x*\linewidth*0.9, -0.1) -- (\x*\linewidth*0.9, 0.1);
    \draw (\x*\linewidth*0.9 + 0.013*\linewidth, 0.5) node[above, rotate=90, font=\scriptsize] {\time};
}

\node[styleblue] (1) at (0*\linewidth*0.9, -1) {$s_1$};
\node[styleblue] (2) at (0*\linewidth*0.9, -2) {$s_1$};
\node[styleblue] (3) at (0*\linewidth*0.9, -3) {$s_1$};
\node[stylered] (6) at (0.06666666666*\linewidth*0.9, -3) {$s_2$};
\node[stylered] (9) at (0.23333333333*\linewidth*0.9, -1) {$s_4$};
\node[stylered] (10) at (0.23333333333*\linewidth*0.9, -2) {$s_4$};
\node[styleblue] (11) at (0.6*\linewidth*0.9, -1) {$s_1$};
\node[styleblue] (12) at (0.6*\linewidth*0.9, -2) {$s_1$};
\node[stylered] (14) at (0.66666666666*\linewidth*0.9, -2) {$s_2$};
\node[stylered] (16) at (0.83333333333*\linewidth*0.9, -1) {$s_4$};
\node[styleblue] (17) at (0.43333333333*\linewidth*0.9, -3) {$s_5$};
\node[stylered] (22) at (0.8*\linewidth*0.9, -3) {$s_7$};
\node[styleblue] (19) at (0.56666666666*\linewidth*0.9, -4) {$s_6$};
\node[stylered] (21) at (0.66666666666*\linewidth*0.9, -4) {$s_3$};
\node[styleblue] (23) at (0.23333333333*\linewidth*0.9, -5) {$s_6$};
\node[stylered] (24) at (0.33333333333*\linewidth*0.9, -5) {$s_3$};
\node[styleblue] (25) at (0.1*\linewidth*0.9, -7) {$s_5$};
\node[stylered] (31) at (0.46666666666*\linewidth*0.9, -7) {$s_7$};
\node[styleblue] (26) at (0.1*\linewidth*0.9, -6) {$s_5$};
\node[stylered] (30) at (0.33333333333*\linewidth*0.9, -6) {$s_3$};

\path [-,thick, gray, out=0, in=180] (6) edge (11);
\path [-,thick, gray, out=0, in=180] (6) edge (12);
\path [-,thick, gray, out=0, in=180] (6) edge (23);
\path [-,thick, gray, out=0, in=200] (6) edge (19);
\path [-,thick, gray, out=-60, in=200] (9) edge (19);
\path [-,thick, gray, out=-45, in=200] (10) edge (19);
\path [-,thick, gray, out=0, in=200] (30) edge (19);
\path [-,thick, gray, out=0, in=180] (24) edge (11);
\path [-,thick, gray, out=0, in=180] (30) edge (12);
\path [-,thick, gray, out=0, in=180] (10) edge (11);
\path [-,thick, gray, out=0, in=180] (9) edge (12);
\path [-,ultra thick, black, out=0, in=180] (9) edge (11);
\path [-,ultra thick, black, out=0, in=180] (10) edge (12);
\path [-,ultra thick, black, out=0, in=180] (6) edge (17);
\path [-,ultra thick, black, out=0, in=200] (24) edge (19);
\end{tikzpicture}
    \caption{Bipartite graph obtained by removing all middle nodes and requester-to-releaser edges from the graph in Figure~\ref{fig:H}, and replacing the remaining directed edges with undirected edges. The maximum matching is shown with bold edges.
}
    \label{fig:reducedH}
\end{figure}

For this bipartite graph, the number of nodes is $2|\mathbf{R}|$, representing one Requester node and one Releaser node for each pod route. The number of edges is approximated for a bus schedule where bus runs are uniformly distributed across the service time. In such a scenario, the number of edges can be estimated by:

\begin{equation}
    \int_{0}^{T_{\max}} \frac{T_{\max} - t}{T_{\max}} |\mathbf{R}| \cdot \frac{|\mathbf{R}|}{T_{\max}} dt = \frac{|\mathbf{R}|^2}{2}
\end{equation}

Here, $\frac{|\mathbf{R}|}{T_{\max}} dt$ represents the approximate number of Releaser nodes within a small time interval $dt$ around time $t$, and $\frac{T_{\max} - t}{T_{\max}} |\mathbf{R}|$ represents the approximate number of Requester nodes available after time $t$ that a Releaser node at time $t$ could potentially connect to. Since the graph size is a polynomial factor of the number of bus runs, finding the minimum fleet size can be done in polynomial time.

\subsubsection{Stage 2: Between-Service Routing}

The output from Stage 1 provides a sequence of in-service pod routes assigned to each pod. However, the precise routing of these pods during the intervals between their assigned pod routes remains unspecified. To address this challenge, the methodology from the Integrated Pod Routing approach is adapted. For each pod and for each interval between its assigned in-service pod routes, a dedicated time-space network is created. The optimal routing for the pod within that interval is then determined by solving a minimum-cost flow problem on this localized network.

The routing within each interval for a pod falls into one of three distinct categories. (a) This occurs when a pod is available and the overall service period has begun, but it remains unassigned to any specific pod route (Figure \ref{subfig:stage-2-a}); (b) This covers situations where a pod completes one assigned pod route and is then assigned to begin another (Figure \ref{subfig:stage-2-b}); or (c) This involves a pod that has finished its last assigned pod route but remains in the system before the overall service period concludes (Figure \ref{subfig:stage-2-c}).

Edge definitions and associated costs for these localized time-space networks are consistent with Section \ref{sec:Integrated Pod Routing}, except for a key difference concerning the fleet cost. This cost is included for networks of the first routing category (Category a). It is not included for networks of the second (Category b) and third (Category c) routing categories. This is due to the fact that the pod is already part of an ongoing assignment chain in these cases.

In cases where an assignment interval is long, its corresponding time-space network may grow to a size comparable to the full integrated network. In such scenarios, the hierarchical method may lose its computational advantage, as the resulting subproblems remain large and memory-intensive, and must be solved multiple times rather than once. To address this issue, a refined approach, called Hierarchical-Capped, is introduced. The method introduces a user-defined cap on the duration of each assignment interval, selected based on the available computational resources. This cap helps manage the size of the time-space network, which grows linearly with the interval duration. When an interval exceeds the specified threshold, it is divided into smaller subintervals that are solved sequentially. Because the flow value in each interval is known to be exactly one unit (representing a single pod), the interval can be safely divided into smaller subintervals without risking unserved demand. After solving the flow for one subinterval, the station that receives the pod at the end becomes the starting point for the next subinterval. This sequential structure preserves flow continuity while allowing smaller networks to be solved independently, significantly reducing memory and computation requirements. It is important to note that, while the Hierarchical-Capped approach mitigates computational challenges, it may yield suboptimal solutions compared to the original Hierarchical approach. Therefore, it was used only when it was deemed necessary due to computational constraints.

\begin{figure}[!t]
    \centering      
    \begin{minipage}[t]{0.32\textwidth}
    \resizebox{\linewidth}{!}{
        \begin{tikzpicture}
        % 1. Source (S) and Sink (T) nodes (Draw first or at appropriate layer)
        \node[styleblack] (S) at (0,0) {S};
        \node[styleblack] (T) at (5.5,0) {T};

        % 2. Define the Standby Nodes (These nodes must exist before 'fit' uses them)
        \node[styleblue] (requester) at (4, 3) {$ \quad$};

        \node[stylepurple] (sb1_start) at (1.5, 1.5) {$\quad$};
        \node[stylepurple] (sb2_start) at (1.5, 0.5) {$\quad$};
        \node (v1) at (1.5, -0.4) {$\vdots$};
        \node[stylepurple] (sbN_start) at (1.5, -1.5) {$\quad$};

        \node[stylepurple] (sb1_end) at (4, 1.5) {$\quad$};
        \node[stylepurple] (sb2_end) at (4, 0.5) {$\quad$};
        \node (v2) at (4, -0.4) {$\vdots$};
        \node[stylepurple] (sbN_end) at (4, -1.5) {$\quad$};

        \node at (2.75, 1.5) {$\hdots$};
        \node at (2.75, 0.5) {$\hdots$};
        \node at (2.75, -0.5) {$\hdots$};
        \node at (2.75, -1.5) {$\hdots$};

        % 4. Edges from S to initial standby nodes and from final standby nodes to T
        % Connect to the bounding box of the StandbyArea rectangle for cleaner lines.
        \path[->, thick, out = 0, in = 180] (S) edge (sb1_start);
        \path[->, thick, out = 0, in = 180] (S) edge (sb2_start);
        \path[->, thick, out = 0, in = 180] (S) edge (sbN_start);

        \path[->, thick, out = 0, in = 180] (sb1_end) edge (T);
        \path[->, thick, out = 0, in = 180] (sb2_end) edge (T);
        \path[->, thick,out = 0, in = 180] (sbN_end) edge (T);
        \path[->, thick,dashed, out = 130, in = 230] (v2) edge (requester);

    \end{tikzpicture}
    }
    \subcaption{ }
    \label{subfig:stage-2-a}
    \end{minipage}
    \hfill
    \vrule width 1pt % <--- vertical separator
    \hfill
    \begin{minipage}[t]{0.32\textwidth}
    \resizebox{\linewidth}{!}{
        \begin{tikzpicture}
        % 1. Source (S) and Sink (T) nodes (Draw first or at appropriate layer)
        \node[styleblack] (S) at (0,0) {S};
        \node[styleblack] (T) at (5.5,0) {T};

        % 2. Define the Standby Nodes (These nodes must exist before 'fit' uses them)
        \node[stylered] (releaser) at (1.5, 3) {$\quad$};
        \node[styleblue] (requester) at (4, 3) {$\quad$};

        \node[stylepurple] (sb1_start) at (1.5, 1.5) {$\quad$};
        \node[stylepurple] (sb2_start) at (1.5, 0.5) {$\quad$};
        \node (v1) at (1.5, -0.4) {$\vdots$};
        \node[stylepurple] (sbN_start) at (1.5, -1.5) {$\quad$};

        \node[stylepurple] (sb1_end) at (4, 1.5) {$\quad$};
        \node[stylepurple] (sb2_end) at (4, 0.5) {$\quad$};
        \node (v2) at (4, -0.4) {$\vdots$};
        \node[stylepurple] (sbN_end) at (4, -1.5) {$\quad$};

        \node at (2.75, 1.5) {$\hdots$};
        \node at (2.75, 0.5) {$\hdots$};
        \node at (2.75, -0.5) {$\hdots$};
        \node at (2.75, -1.5) {$\hdots$};
        % 4. Edges from S to initial standby nodes and from final standby nodes to T
        % Connect to the bounding box of the StandbyArea rectangle for cleaner lines.
        \path[->, thick, out = 0, in = 180] (S) edge (sb1_start);
        \path[->, thick, out = 0, in = 180] (S) edge (sb2_start);
        \path[->, thick, out = 0, in = 180] (S) edge (sbN_start);

        \path[->, thick, out = 0, in = 180] (sb1_end) edge (T);
        \path[->, thick, out = 0, in = 180] (sb2_end) edge (T);
        \path[->, thick,out = 0, in = 180] (sbN_end) edge (T);
        \path[->, thick,dash dot, out = -50, in = 50] (releaser) edge (v1);
        \path[->, thick,dashed, out = 130, in = 230] (v2) edge (requester);

    \end{tikzpicture}
    }
    \subcaption{ }
    \label{subfig:stage-2-b}
    \end{minipage}
    \hfill
    \vrule width 1pt % <--- vertical separator
    \hfill
    \begin{minipage}[t]{0.32\textwidth}
    \resizebox{\linewidth}{!}{
        \begin{tikzpicture}
        % 1. Source (S) and Sink (T) nodes (Draw first or at appropriate layer)
        \node[styleblack] (S) at (0,0) {S};
        \node[styleblack] (T) at (5.5,0) {T};

        % 2. Define the Standby Nodes (These nodes must exist before 'fit' uses them)
        \node[stylered] (releaser) at (1.5, 3) {$\quad$};

        \node[stylepurple] (sb1_start) at (1.5, 1.5) {$\quad$};
        \node[stylepurple] (sb2_start) at (1.5, 0.5) {$\quad$};
        \node (v1)at (1.5, -0.4) {$\vdots$};
        \node[stylepurple] (sbN_start) at (1.5, -1.5) {$\quad$};

        \node[stylepurple] (sb1_end) at (4, 1.5) {$\quad$};
        \node[stylepurple] (sb2_end) at (4, 0.5) {$\quad$};
        \node at (4, -0.4) {$\vdots$};
        \node[stylepurple] (sbN_end) at (4, -1.5) {$\quad$};

        \node at (2.75, 1.5) {$\hdots$};
        \node at (2.75, 0.5) {$\hdots$};
        \node at (2.75, -0.5) {$\hdots$};
        \node at (2.75, -1.5) {$\hdots$};

        % 4. Edges from S to initial standby nodes and from final standby nodes to T
        % Connect to the bounding box of the StandbyArea rectangle for cleaner lines.
        \path[->, thick, out = 0, in = 180] (S) edge (sb1_start);
        \path[->, thick, out = 0, in = 180] (S) edge (sb2_start);
        \path[->, thick, out = 0, in = 180] (S) edge (sbN_start);

        \path[->, thick, out = 0, in = 180] (sb1_end) edge (T);
        \path[->, thick, out = 0, in = 180] (sb2_end) edge (T);
        \path[->, thick,out = 0, in = 180] (sbN_end) edge (T);
        \path[->, thick,dash dot, out = -50, in = 50] (releaser) edge (v1);
    \end{tikzpicture}
    }
    \subcaption{ }
    \label{subfig:stage-2-c}
    \end{minipage}
    \caption{Three categories of routing for a pod within an interval: (a) pod available but unassigned; (b) pod transitioning between assigned routes; (c) pod remaining in system after its last assigned route concludes.}
    \label{fig:between_service_network_schematic}
\end{figure}
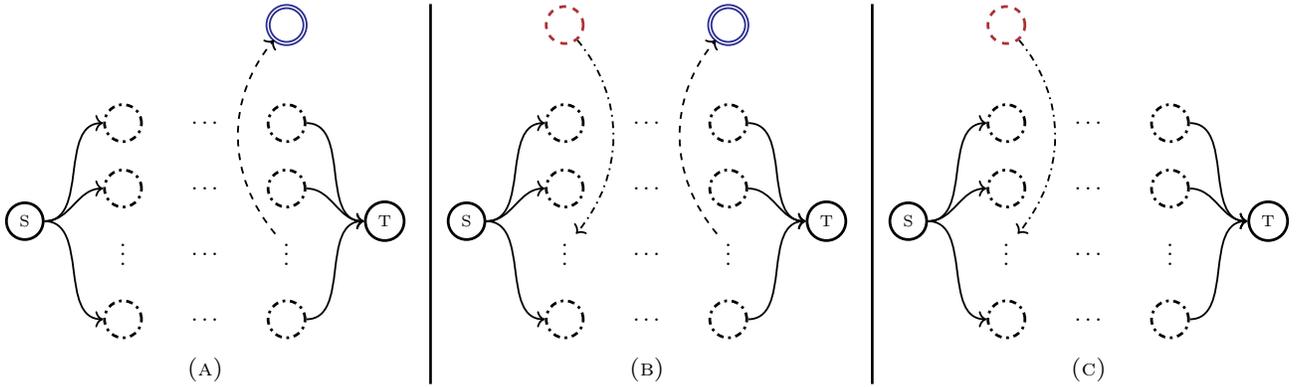

\section{Case Study}
\label{sec:Case Study}
This section presents a detailed case study to evaluate the proposed modular pod rebalancing models under realistic demand. The study focuses on local bus routes within the borough of Manhattan in New York City. The subsequent subsections detail the data collection process, model implementation, and scenario definitions.

\subsection{Data Collection and Preparation}
\label{sec:Data Collection and Preparation}
New York City's public transit system, operated by the Metropolitan Transportation Authority (MTA), offers extensive data resources relevant to this study. In 2023, the MTA bus network recorded an average daily ridership of approximately $1.4$ million passengers \citep{mta_about}. This study utilizes static and real-time General Transit Feed Specification (GTFS) data from the MTA to perform a detailed analysis. Static GTFS data includes comprehensive information on routes and scheduled service times. The investigation focuses on local bus routes operating within the borough of Manhattan during weekdays, covering $41$ distinct routes and a total of $24,\!693$ weekday bus runs. Figure \ref{fig:active_trips} illustrates the number of active bus runs throughout the day.

To supplement the static data, GTFS real-time API requests were sent every minute on February 7 and 14, as well as from February 17 to 21 and 24 to 28, 2025. These requests captured live snapshots of buses operating in the system at those moments. Some of the responses included onboard passenger counts. 

At the end of the data collection process, each GTFS real-time record was matched to its corresponding scheduled bus run and stop sequence. Since real-time snapshots are often captured while buses are traveling between stops, the onboard passenger count at that moment provides a lower bound for both the preceding stop and the upcoming stop. After associating passenger counts with their respective trips for each day, the data from all collection days were combined. Assuming similar passenger demand on weekdays, the number of passengers at each stop was estimated as the maximum observed value across all days. This approach provides a conservative estimate of passenger volume at each stop.

Despite the matching process, $40.1\%$ of the trips had no stops with any passenger count data. These trips were excluded and the analysis focused on those with at least some passenger count data.
Of the remaining $59.9\%$, passenger counts were available for approximately $62.6\%$ of stops. For the remaining stops without data, the passenger count was estimated by taking the maximum of the counts from the nearest preceding or following stop with an available passenger count. Figure~\ref{fig:occupancy_trend} shows the average onboard occupancy throughout the day, based on both real-time observations and estimated values for missing stops. The average bus occupancy across all observations was approximately $19\%$. Assuming that each pod has a capacity of $16$ passengers \citep{tecnobus2025}, the number of pods required at each stop can be calculated.

\begin{figure}[!b]
    \centering
    \begin{subfigure}[b]{0.48\linewidth}
        \includegraphics[width=\linewidth]{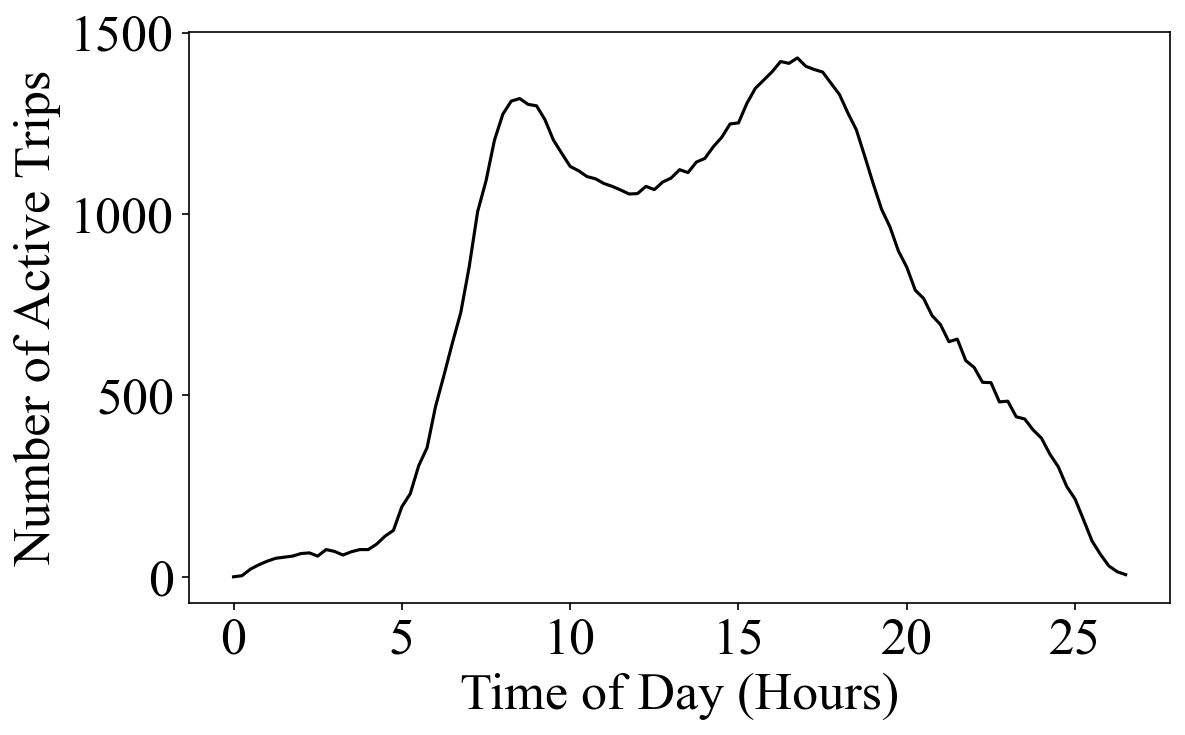}
        \caption{ }
        \label{fig:active_trips}
    \end{subfigure}
    \vspace{1em}  % optional spacing between subfigures
    \begin{subfigure}[b]{0.48\linewidth}
        \includegraphics[width=\linewidth]{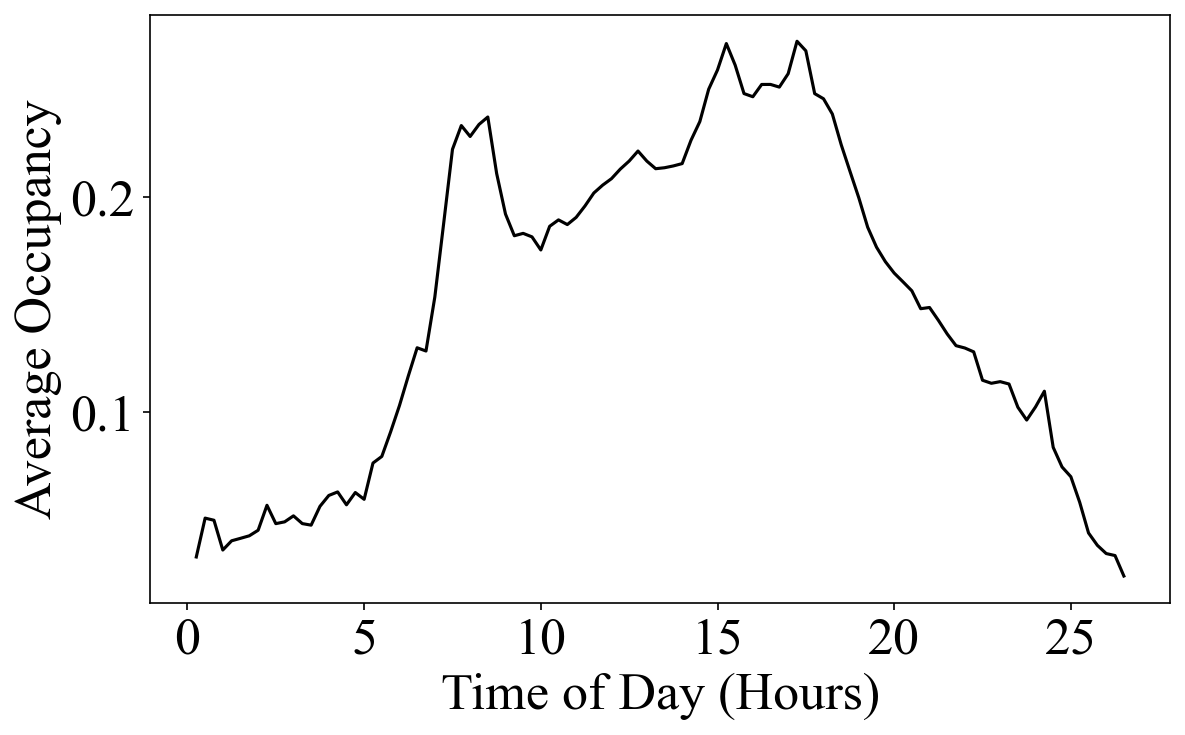}
        \caption{ }
        \label{fig:occupancy_trend}
    \end{subfigure}
    \caption{(a) Number of active bus runs throughout a weekday day. (b)Average bus occupancy over the day, combining real-time observations and estimated values. }
    \label{fig:bus_metrics}
\end{figure}

\subsection{Experimental Design}
\label{subsec:Experimental Design}
\textcolor{black}{To evaluate how memory usage and solution quality vary with network size for both solution approaches, 15 subsets of bus runs were selected from the bus system in Manhattan. Each subset includes bus runs that collectively cover a specific number of stations, allowing controlled variation in network size.} For each subset, both the time-space network \(G\) and the corresponding bipartite graph \(H\) were constructed. The number of edges in these networks is reported in Table~\ref{tab:experimental_design}. To examine the impact of temporal resolution, four different time steps (\(\Delta t = 5\), 15, 30, and 60 seconds) were created for network \(G\).

\begin{table}[!t]
    \centering
    \footnotesize 
    \caption{Edge counts for networks $G$ and $H$ across all instances and time steps.}
    \label{tab:experimental_design}
    \begin{tabular}{@{}l c c *{5}{c} c@{}}
        \toprule
        \textbf{\textcolor{black}{Instance}} & \textbf{\# Stations} & \textbf{\# Bus Runs} & \multicolumn{5}{c}{\textbf{G}} & \textbf{H} \\
        \cmidrule(lr){4-8}
                             &       &        &$\Delta t$ (s) & \textbf{5} & \textbf{15} & \textbf{30} & \textbf{60} & \\
        \midrule
        1  & 13 & 309 &    &  2,905,895  &  969,691   &  485,647   &  243,541   &  235,799 \\
        2  & 13 & 312 &    &  2,922,465  &  975,259   &  488,463   &  245,059   &  283,349 \\
        3  & 14 & 122 &    &  2,840,019  &  947,005   &  473,759   &  237,137   &   17,303 \\
        4  & 14 & 260 &    &  3,320,137  & 1,107,745  &  554,651   &  278,107   &  238,936 \\
        5  & 14 & 338 &    &  3,462,327  & 1,155,437  &  578,709   &  290,351   &  411,007 \\
        6  & 15 & 261 &    &  3,811,125  & 1,271,447  &  636,539   &  319,092   &  262,148 \\
        7  & 20 & 489 &    &  6,898,207  & 2,301,009  & 1,151,709  &  577,073   &  572,079 \\
        8  & 30 & 244 &    & 13,330,097  & 4,444,221  & 2,222,761  & 1,112,043  &   77,748 \\
        9  & 41 & 678 &    & 23,065,481  & 7,692,695  & 3,849,529  & 1,927,905  & 3,209,093 \\
        10 & 59 &  86 &    & 43,173,838  &14,392,742  & 7,197,475  & 3,599,810  &   10,726 \\
        11 & 72 & 354 &    & 72,965,191  &24,669,192  &12,336,803  & 6,170,621  &  337,336 \\
        12 & 91 & 154 &    &141,201,381  &47,597,520  &23,801,253  &11,903,184  &   65,847 \\
        13 & 100 & 379 &    & 182,953,259 & 60,987,693  &30,809,771 & 15,408,419 &  354,997 \\
        14 & 184 & 328 &    & 620,594,015 & 205,746,093 & 103,448,016 & 26,340,525 &  330,592 \\
        15 & 240 & 410 &    &886,507,566 &  296,774,737 & 148,403,510 & 37,472,615 &  969,840 \\
        \bottomrule
    \end{tabular}
\end{table}

\textcolor{black}{As shown in Table~\ref{tab:experimental_design}, the number of stations in the selected subsets varies from $13$ to $240$, which results in significant differences in the scale of network \(G\) . Network \(G\) becomes larger as the number of stations grows and the temporal resolution becomes finer. In contrast, the size of network \(H\) depends more on the number of bus runs, as its structure reflects possible connections between pod routes.}

To define the cost parameters for a realistic cost scenario, several assumptions were made based on current transportation trends. For the fleet cost of a pod, $c^{\text{fleet.}}$, it was assumed that the purchase price of an electric pod is comparable to that of an autonomous electric vehicle. A purchase price of $40,000 \text{ USD}$ and a life expectancy of $8$ years yield a daily cost of approximately $13.7 \text{ USD}$ per pod. This number is aligned with the capital cost of the pod in \cite{cheng2024autonomous}. The cost of empty movement, $c^{\text{move}}$, was based on an energy consumption of $1 \text{ KWH/mile}$, an average speed of $15 \text{ mph}$, and an electricity price of $0.13 \text{ USD/KWH}$, resulting in a cost of approximately $0.03 \text{ USD per minute}$. The magnitude of this cost aligns with the energy cost of the pod presented in \cite{cheng2024autonomous} as well as \cite{romea2021analysis}. Finally, the parking cost, $c_s^{\text{Park}}$, was set at $2 \text{ USD per hour}$, which is approximately $0.03 \text{ USD per minute}$.

Beyond the realistic cost configuration, several alternative cost scenarios are explored. In one scenario, only the fleet cost of each pod is considered, with all other costs set to zero. Under this setup, the Hierarchical method produces the exact minimum fleet size, which serves as a baseline for comparing the solution quality of the Integrated Pod Routing approach at different time steps. Another scenario excludes the fleet cost and focuses solely on minimizing parking and empty movement costs. Additional scenario considers zero empty movement cost to isolate the impact of parking cost. Finally, a scenario introduces variability in parking prices, where the parking cost \(c_s^{\text{Park}}\) is selected uniformly at random from the set \{0.01, 0.02, 0.03, 0.04, 0.05, 0.06, 0.07, 0.08\} USD per minute. Table~\ref{tab:cost_scenarios} summarizes the full set of cost scenarios considered in the analysis.

\begin{table}[!h]
    \centering
    \footnotesize
    \caption{Cost Configurations}
    \label{tab:cost_scenarios}
    \begin{tabular}{@{}llll@{}}
        \toprule
        \textbf{Scenario} & \textbf{$c^{\text{fleet}}$} & \textbf{$c^{\text{move}}$} & \textbf{$c_s^{\text{Park}}$} \\
        \midrule
        S1: Baseline (All Costs)       & 13.7 & 0.03 & 0.03 \\
        S2: Fleet Cost Only            & 13.7 & 0    & 0    \\
        S3: No Fleet Cost              & 0    & 0.03 & 0.03 \\
        S4: No Movement Cost           & 13.7 & 0    & 0.03 \\
        S5: Variable Parking Cost      & 13.7 & 0.03 & Uniform from $\{0.01,0.02,\dots,0.08\}$ \\
        \bottomrule
    \end{tabular}
\end{table}

\subsection{Computational Details and System Configuration}

This section describes the hardware, software, and solvers used to implement and run the optimization models.

The minimum-cost flow problems formulated on the time-space networks are solved using the Gurobi Optimizer. The models are implemented in Python using the Gurobi Python API. Computational experiments are conducted on a Google Cloud Platform virtual machine instance. Specifically, the instance is of type \texttt{c2d-highmem-112}, featuring 112~vCPUs and 896~GB of memory. The system architecture is \texttt{x86\_64}, running Debian GNU/Linux~12 (Bookworm). During optimization, the Gurobi network simplex algorithm is applied.

To solve the maximum matching problems on bipartite graphs, a GPU-based push-relabel algorithm with global relabeling is employed \citep{6687335}. The implementation is adapted from code provided by the authors in a publicly available CUDA repository\footnote{\url{https://github.com/nagendervss/BipartiteMatchingCUDA}}. Computations are conducted on a dedicated Google Cloud VM of type \texttt{g2-standard-32}, equipped with 32~vCPUs, 125~GB of RAM, and an NVIDIA L4 GPU with 24~GB of memory. The system operates on Debian GNU/Linux~11 (Bullseye) with CUDA version~12.4 and NVIDIA driver version~550.90.07.

\section{Results and Discussion}
\label{sec:Results and Discussion}
This section presents the results obtained from applying the proposed routing approaches to the case study described in Section \ref{sec:Data Collection and Preparation}. The analysis covers the various network instances and cost scenarios defined in Section \ref{subsec:Experimental Design}. Solution quality and computational efficiency are discussed across different network scales and temporal resolutions.

\subsection{Solution Performance}
Table~\ref{tab:S1:objective_comparison} summarizes the objective function values obtained from the Integrated and Hierarchical Pod Routing methods for Scenario~1 (Baseline) across the first 13 instances. Instances 14 and 15 are designed specifically for the Hierarchical-Capped method and are excluded from this comparison.  The results for Integrated Pod Routing are presented for four discrete time steps: 5, 15, 30, and 60 seconds, while the Hierarchical Pod Routing uses a fixed 30-second resolution for Stage 2.

\textcolor{black}{Across all 13 instances, reducing the time step from 30 to 5 seconds in the Integrated method does not improve the objective function values. In contrast, increasing the time step to 60 seconds occasionally results in a higher objective, as observed in Scenario~2. These results suggest that a 30-second resolution is sufficient for capturing the necessary temporal detail. Finer resolutions increase network size and computational complexity without yielding better solutions. This observation aligns with practical considerations, since scheduling at very fine intervals does not reflect the inherent imprecision in real-world operations.}

\begin{table}[!b]
\centering
\footnotesize
\caption{Comparison of Objective Values for Integrated and Hierarchical Pod Routing. Bold values indicate lower objective function values between the two methods.}
\label{tab:S1:objective_comparison}

\begin{tabular}{lccccc}
\toprule
\textbf{Instance} & \multicolumn{4}{c}{\textbf{Integrated}} & \textbf{Hierarchical} \\
\cmidrule(lr){2-5}
 & \textbf{5 } & \textbf{15 } & \textbf{30 } & \textbf{60 } & \\
\midrule
1 & $1743.95$ & $1743.89$ & $1743.95$ & $1800.73$ & $1743.95$\\
2 & $1992.14$ & $1992.14$ & $1992.14$ & $2049.16$ & $1992.14$ \\
3 & $437.89$ & $437.89$ & $437.89$ & $437.89$ & $437.89$ \\
4 & $1547.23$ & $1547.23$ & $1547.23$ & $1547.23$ & $\mathbf{1435.49}$ \\
5 & $2666.02$ & $2666.02$ & $2666.02$ & $2666.02$ & $2666.02$ \\
6 & $1654.98$ & $1654.98$ & $1654.98$ & $1654.98$ & $\mathbf{1599.37}$ \\
7 & $2691.96$ & $2691.96$ & $2691.96$ & $2691.96$ & $2691.96$ \\
8 & $1038.32$ & $1038.32$ & $1038.32$ & $1038.32$ & $1038.32$ \\
9 & $7956.34$ & $7956.34$ & $7956.34$ & $8150.10$ & $\mathbf{7723.90}$ \\
10 & $570.57$ & $570.57$ & $570.55$ & $570.57$ & $526.36$ \\
11 & $1697.76$ & $1697.76$ & $1697.76$ & $1697.76$ & $1649.23$ \\
12 & $1660.66$ & $1660.66$ & $1660.66$ & $1717.77$ & $1660.66$ \\
13 & $4806.67$ & $4806.67$ & $4806.68$ & $4806.67$ & $\mathbf{4749.93}$ \\
\bottomrule
\end{tabular}
\end{table}

\textcolor{black}{In this cost scenario, the Hierarchical method performs at least as well as the Integrated method in terms of objective value, and in some instances, it produces slightly better results.} This is because Stage 1 in the hierarchical approach operates in continuous time, and an edge is added between a releaser and a requester node whenever a pod can reach the requester within the required time window. In contrast, the Integrated method uses discrete time and involves two additional steps: the pod must first move to a standby node after release and later be reassigned from a standby node to the requester. Each of these steps can introduce a delay of up to $\Delta t$, which may cause some transitions that are feasible in the hierarchical method to become infeasible in the integrated method.

Table~\ref{tab:objective_scenarios} compares the objective values of the two solution approaches under the remaining cost scenarios. For all scenarios, the time-space networks are constructed with a temporal resolution of $\Delta t = 30$ seconds. \textcolor{black}{In Scenario~2 (Fleet Cost Only), the objective includes only the cost of purchasing pods. As a result, minimizing the total cost reduces to minimizing the fleet size. The Hierarchical method solves this in Stage~1 using an exact algorithm. It therefore produces exact solutions in this scenario}, whereas the integrated method yields approximate ones. The average percentage error in pod size is approximately 1.67\%, indicating that the Integrated Pod Routing method provides reasonably accurate estimates.

\begin{table}[!b]
\centering
\footnotesize
\caption{Objective Function Comparison Across Cost Scenarios. Bold values indicate lower objective function values between the two methods.}
\label{tab:objective_scenarios}
\setlength{\tabcolsep}{2pt}  
\begin{tabular}{lrrrrrrrr}
\toprule
\textbf{Instance} 
& \multicolumn{2}{c}{\textbf{S2}} 
& \multicolumn{2}{c}{\textbf{S3}} 
& \multicolumn{2}{c}{\textbf{S4}} 
& \multicolumn{2}{c}{\textbf{S5}} \\
\cmidrule(lr){2-3}
\cmidrule(lr){4-5}
\cmidrule(lr){6-7}
\cmidrule(lr){8-9}

& Integrated & Hierarchal 
& Integrated & Hierarchal 
& Integrated & Hierarchal 
& Integrated & Hierarchal \\
\midrule
1 & $506.90$ & $506.90$ & $1237.05$ & $1237.05$  & $510.64$ & $510.64$ & $1023.27$ & $\mathbf{1021.47}$ \\
2 & $589.10$ & $589.10$ & $1403.04$ & $1403.04$  & $593.12$ & $593.12$ & $\mathbf{1180.15}$ & $1181.93$ \\
3 & $150.70$ & $150.70$ & $287.19$ & $287.19$  & $151.39$ & $\mathbf{151.38}$ & $275.24$ & $\mathbf{274.92}$ \\
4 & $465.80$ & $\mathbf{438.40}$ & $1081.43$ & $\mathbf{997.09}$   & $471.19$ & $\mathbf{443.78}$ & $942.74$ & $\mathbf{889.38}$ \\
5 & $739.80$ & $739.80$ & $1926.22$ & $1926.22$ & $747.40$ & $\mathbf{747.38}$ & $\mathbf{1529.84}$ & $1537.1$ \\
6 & $493.20$ & $\mathbf{479.50}$ & $1161.78$ & $\mathbf{1119.87}$ & $498.54$ & $\mathbf{484.83}$ & $994.18$ & $\mathbf{962.56}$ \\
7 & $767.20$ & $767.20$ & $1924.76$ & $1924.76$ & $\mathbf{771.92}$ & $771.96$ & $\mathbf{1562.98}$ & $1572.3$ \\
8 & $342.50$ & $342.50$ & $695.82$ & $695.82$  & $\mathbf{343.90}$ & $343.94$ & $\mathbf{596.29}$ & $610.32$ \\
9 & $3096.20$ & $\mathbf{3027.70}$ & $4860.14$ & $\mathbf{4696.20}$ & $3115.80$ & $\mathbf{3047.41}$ & $\mathbf{4975.25}$ & $5058.24$ \\
10 & $205.50$ & $\mathbf{191.80}$ & $365.07$ & $\mathbf{334.56}$ & $206.21$ & $\mathbf{192.51}$ & $349.89$ & $\mathbf{327.96}$ \\
11 & $616.50$ & $\mathbf{602.80}$ & $1081.26$ & $\mathbf{1046.43}$ & $620.59$ & $\mathbf{606.89}$ & $\mathbf{1039.50}$ & $1041.54$ \\
12 & $493.20$ & $493.20$ & $1167.46$ & $1167.46$ & $496.02$ & $496.02$ & $\mathbf{976.23}$ & $977.24$ \\
13 & $1438.50$ & $\mathbf{1424.80}$ & $3368.18$ & $\mathbf{3325.13}$ & $1443.59$ & $1424.80$ & $2940.39$ & $\mathbf{2903.76}$  \\

\bottomrule
\end{tabular}
\end{table}

Scenario~3 (No Fleet Cost) shows that the hierarchical method continues to outperform the integrated method, even when the fleet cost is removed from the objective function. Parking and empty movement costs remain part of the cost structure, so pods that are not serving still contribute to the total cost. Consequently, reducing the number of pods remains an effective strategy for minimizing total system cost. This outcome depends on the relative cost structure. For instance, if parking were significantly cheaper than empty movement, it might be more cost-effective to introduce additional pods to serve local demand. These pods could then park immediately, rather than moving across the network to serve other requests. Scenario~4 (No Movement Cost) examine cases where empty movement cost is significantly higher than the parking cost, while fleet cost is still included in the objective. These results show that the hierarchical method continues to outperform the integrated method in most instances.

For Scenario 5 (Random Parking), both the Integrated and Hierarchical methods produce comparable solutions. When the Integrated method yields a lower objective value, the average relative gap is approximately 0.79\%. In contrast, when the Hierarchical method provides the better solution, the average relative gap rises to 2.92\%, indicating a more substantial advantage in those cases.

\subsection{Computational Performance}

Stage~1 of the hierarchical method is solved efficiently using the GPU-based maximum matching algorithm. The solve time ranges from $0.224$ to $0.267$ seconds across all instances and shows efficient performance.

The most computationally demanding step in both methods is solving the minimum cost flow on the time-space network. In the integrated method, the entire service horizon is modeled as a single network, resulting in one large optimization problem. In contrast, the hierarchical method breaks the problem into smaller intervals and solves separate time-space networks for each. Solution time and peak memory usage were monitored during the implementation of the integrated algorithm across all instances. As shown in Figures~\ref{fig:solve-time-vs-edges} and~\ref{fig:memory-vs-edges}, both metrics grow super-linearly with the number of edges in the network. This scaling behavior implies that smaller time-space networks are computationally more efficient. Extrapolating from these results, optimizing pod allocations for the full Manhattan network would require approximately $3$~TB of RAM and $36$ hours of computation.

For the hierarchical pod routing method, memory efficiency depends on the duration of the scheduling intervals. Table~\ref{tab:interval_stats} summarizes these durations, and Figure~\ref{fig:enter-label} illustrates their distribution. While repositioning intervals are typically short, initial and terminal idle periods can be substantially longer. These long intervals can significantly expand the size of the corresponding time-space networks. As a result, the required memory may become comparable to that of solving the full integrated network in a single run.

\begin{table}[!b]
    \centering
    \footnotesize
    \caption{Descriptive statistics of interval durations (hours).}
    \label{tab:interval_stats}
    \begin{tabular}{@{}lrrrrrrr@{}}
        \toprule
        \textbf{Interval Type}      & \textbf{Mean} & \textbf{Std} & \textbf{Min} & \textbf{25\%} & \textbf{50\%} & \textbf{75\%} & \textbf{Max} \\
        \midrule
        Initial idle time           & 4.65          & 3.32         & 0.00         & 2.07          & 4.18          & 6.58          & 14.83        \\
        Repositioning time          & 0.40          & 0.77         & 0.01         & 0.13          & 0.20          & 0.34          & 8.56         \\
        Terminal idle time          & 5.61          & 3.50         & 0.00         & 3.18          & 5.44          & 7.25          & 15.69        \\
        \bottomrule
    \end{tabular}
\end{table}

\begin{figure}[t]
    \centering
    \includegraphics[width=0.7\linewidth]{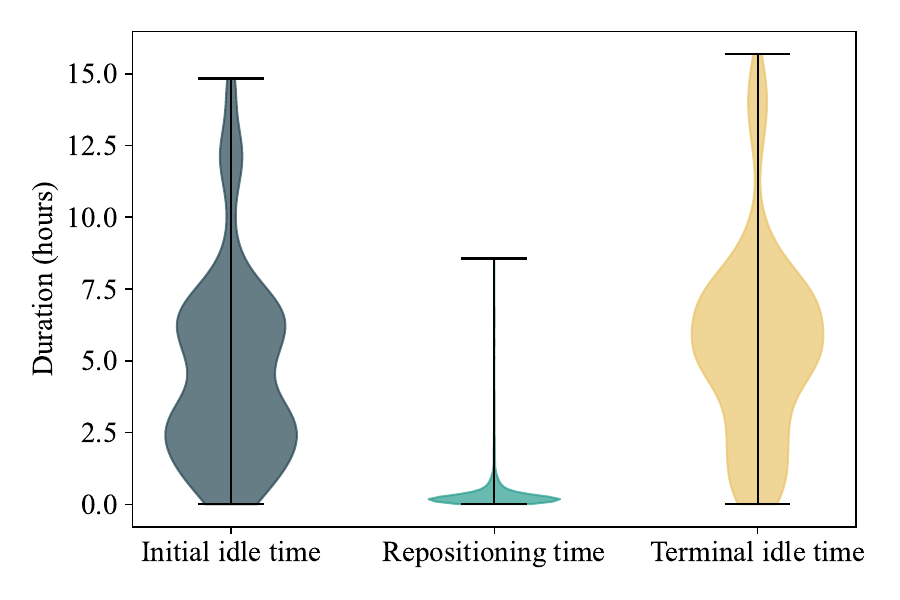}
    \caption{Distribution of interval durations in the hierarchical method.}
    \label{fig:enter-label}
\end{figure}

\begin{figure}[!tb]
  \centering
  % first subfigure
  \begin{subfigure}[b]{0.49\textwidth}
    \centering
    \includegraphics[width=\textwidth]{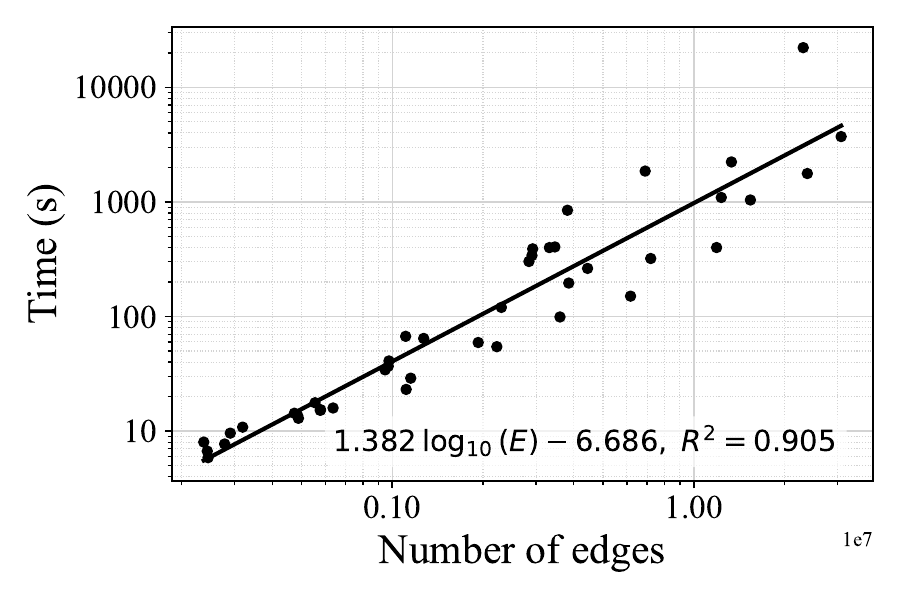}
    \caption{ }
    \label{fig:solve-time-vs-edges}
  \end{subfigure}
  \hfill
  % second subfigure
  \begin{subfigure}[b]{0.49\textwidth}
    \centering
    \includegraphics[width=\textwidth]{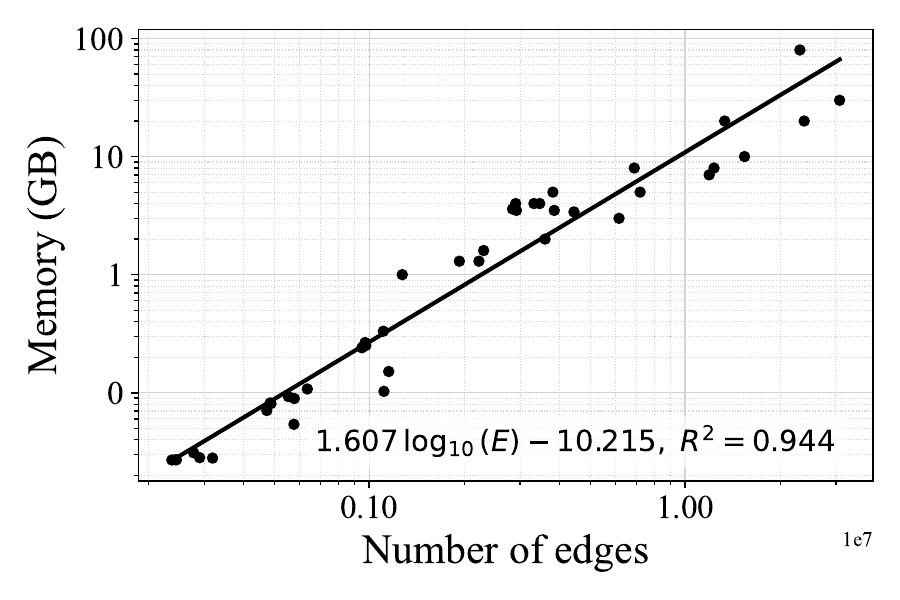}
    \caption{ }
    \label{fig:memory-vs-edges}
  \end{subfigure}
  \caption{Log–log scatter plots with fitted power‐law lines and \(R^2\) annotations. (a) Total solve time vs.\ number of edges. (b) Memory usage vs.\ number of edges.}
  \label{fig:metrics-vs-edges}
\end{figure}

\textcolor{black}{Table~\ref{tab:comparison} shows the results for Instances~12–15 using the Hierarchical-Capped method. The method sets a cap on the time-space network size to reduce memory use and solve instances that require high memory and long runtimes under the original Hierarchical method. In these cases, the maximum number of nodes in each subinterval is set to approximately 15{,}000. Although the Capped approach is heuristic, the objective values stay close to those of the Integrated method. In all cases, the gap is small, and the method provides a practical option when computational limits make the original model difficult to solve.}

\begin{table}[!h]
\centering
\footnotesize
\caption{Comparison of objective value and memory usage across Integrated vs. Hierarchical-Capped methods}
\label{tab:comparison}
\begin{tabular}{ccccc}
\toprule
\textbf{Instance} & \multicolumn{2}{c}{\textbf{Integrated}} & \multicolumn{2}{c}{\textbf{Hierarchical-Capped}} \\
           & Obj & Memory (GB) & Obj & Memory (GB) \\
\midrule
12 & 976.23 & 17.95  & 986.8 & 6.33 \\
13 & 2940.39 & 21.10 & 2959.20 & 16.87 \\
14 & 1580.65 & 257.33 & 1529.97 & 89.66 \\
15 & 4320.69 & 563.24 & 4280.35 & 121.47 \\

% Add more rows as needed
\bottomrule
\end{tabular}
\end{table}

\section{Conclusions}
\label{sec:Conclusion}

This study addresses the challenge of rebalancing empty modular transit vehicles between fixed-service schedules. Unlike existing approaches that focus primarily on in-service operations, the proposed framework explicitly models between-service repositioning, accounting for pod acquisition, maintenance, parking, and movement costs. The proposed two-stage hierarchical method first determines the minimum fleet size via a GPU-accelerated maximum matching algorithm, then generates detailed routing plans through a sequence of minimum-cost flow problems on time-space networks. To address excessive computational memory demands in large instances, a capped-interval heuristic was introduced. This limits the size of each subproblem and makes computation tractable.

Experiments on a large-scale Manhattan bus network show that the hierarchical method achieves objective values on par with or better than the full-scale integrated formulation, while requiring significantly less memory. This improvement in memory efficiency enables the proposed method to scale to large networks that are otherwise computationally infeasible to solve using a single integrated network.

\textcolor{black}{It is important to note that while Algorithm~\ref{alg:decompose}(\textsc{Decompose\_Bus\_Run}) identifies the minimum number of pod routes to satisfy the demand for in-service routing, the configuration of pod routes can influence the between-service routing. In the ideal situation, the in-service and between-service routing should be jointly optimized. Such a joint optimization, however, is beyond the scope of this study and is left for future research. Furthermore, future work can extend this framework to incorporate stochastic demand patterns and stochastic travel times, allowing the model to better reflect real-world variability.}

\section*{Declaration of generative AI and AI-assisted technologies in the writing process}
The authors acknowledge the use of AI-assisted tools (such as ChatGPT and Gemini) for language editing and grammar refinement during manuscript preparation. No AI tool was used for generating novel content, data analysis, or drawing conclusions. All responsibility for the accuracy and integrity of the manuscript remains with the authors.

\section*{Declaration of Conflicting Interest}
The authors declared no potential conflicts of interest with respect to the research, authorship, and/or publication of this article.

\bibliographystyle{apalike}   % or plainnat
\bibliography{reference}
\end{document}